\DeclareMathAlphabet{\mathpzc}{OT1}{pzc}{m}{it}
\newcommand{\df}{{\, \stackrel{\mathrm{def}}{=}\, }}
\begin{document}

\title[Determinacy of metric games]{Determinacy and indeterminacy of games played on complete metric spaces}

\authorlior\authortue\authordavid

\maketitle

\begin{abstract}
Schmidt's game is a powerful tool for studying properties of certain sets which arise in Diophantine approximation theory, number theory, and dynamics. Recently, many new results have been proven using this game. In this paper we address determinacy and indeterminacy questions regarding Schmidt's game and its variations, as well as more general games played on complete metric spaces (e.g. fractals). We show that except for certain exceptional cases, these games are undetermined on Bernstein sets.
\end{abstract}

\maketitle
\section{Introduction}

In 1966, W. M. Schmidt \cite{Schmidt1} introduced a two-player game referred to thereafter as Schmidt's game. Schmidt invented the game primarily as a tool for studying certain sets which arise in number theory and Diophantine approximation theory. These sets are often exceptional with respect to both measure and category. The most significant example is the following. Let $\Q$ denote the set of rational numbers. A real number $x$ is said to be \emph{badly approximable} if there exists a positive constant $c = c(x)$ such that $\left|x-\frac{p}{q}\right| > \frac{c}{q^2}$ for all $\frac{p}{q}\in \Q$. We denote the set of badly approximable numbers by $\BA$. This set plays a major role in Diophantine approximation theory, and is well known to be both meager and Lebesgue null. Nonetheless, using his game, Schmidt was able to prove the following remarkable result:

\begin{theorem}[Schmidt \cite{Schmidt1}]
Let $(f_n)_{n=1}^{\infty}$  be a sequence of $\CC^1$ diffeomorphisms of $\R$. Then the Hausdorff dimension of the set $\bigcap_{n=1}^{\infty}f^{-1}_n(\BA)$ is $1$. In particular, $\bigcap_{n=1}^{\infty}f^{-1}_n(\BA)$ is uncountable.
\end{theorem}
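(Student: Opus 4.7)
The plan is to deduce the theorem from Schmidt's game machinery, which is the reason the game was invented. First I would introduce, for parameters $\alpha,\beta\in(0,1)$, the $(\alpha,\beta)$-game on $\mathbb{R}$: Bob picks a closed ball $B_0$, then Alice picks $A_0\subseteq B_0$ with radius $\alpha$ times that of $B_0$, then Bob picks $B_1\subseteq A_0$ with radius $\beta$ times that of $A_0$, and so on. A target set $S$ is called \emph{$\alpha$-winning} if Alice has a strategy, for every $\beta$, ensuring that the unique point in $\bigcap_n B_n$ lies in $S$. I would quote the three fundamental properties of this class: (i) every $\alpha$-winning subset of $\mathbb{R}$ has Hausdorff dimension $1$; (ii) the countable intersection of $\alpha$-winning sets is $\alpha$-winning (Alice diagonalizes over her strategies for the individual sets, using only every $k$-th move to attack the $k$-th target); and (iii) if $S\subseteq\mathbb{R}$ is $\alpha$-winning and $f\colon\mathbb{R}\to\mathbb{R}$ is a $C^1$ diffeomorphism, then $f^{-1}(S)$ is $\alpha'$-winning for some $\alpha'=\alpha'(f,\alpha)>0$.

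The heart of the argument, then, is Schmidt's original theorem that $\mathrm{BA}$ itself is $\alpha$-winning for every $\alpha\in(0,1/2)$. I would sketch this as follows: given Bob's ball $B_n$ of radius $\rho_n$, Alice's task is to knock out, at an appropriate scale, the ``forbidden'' neighborhood of each rational $p/q$ with $q$ in a dyadic window determined by $\rho_n$. A short counting argument (two rationals $p/q,p'/q'$ with comparable denominators differ by at least $1/(qq')$) shows that at most one such forbidden neighborhood meets $B_n$, and, when $\alpha\beta$ is sufficiently small, Alice can always find a sub-ball of the required radius that avoids it. Iterating this over all scales yields a point $x\in\bigcap_n B_n$ with an explicit badly-approximable constant $c(x)$.

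Given (i)--(iii) and the winning property of $\mathrm{BA}$, the theorem is almost immediate. By (iii), each $f_n^{-1}(\mathrm{BA})$ is $\alpha_n$-winning for some $\alpha_n>0$. Here I would be slightly careful: for the countable-intersection property (ii) to apply, one needs a common winning parameter. This is handled by the \emph{monotonicity} of the winning class in $\alpha$ for sets already known to be winning for all small $\alpha$, together with the fact that $\mathrm{BA}$ is winning for all $\alpha\in(0,1/2)$; tracing this through (iii) lets one arrange that each $f_n^{-1}(\mathrm{BA})$ is $\alpha_0$-winning for a single $\alpha_0>0$ (shrinking $\alpha_0$ if necessary using a fixed enumeration). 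Then by (ii), $\bigcap_n f_n^{-1}(\mathrm{BA})$ is $\alpha_0$-winning, and by (i) its Hausdorff dimension is $1$, hence in particular it is uncountable.

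The main obstacle is clearly the proof that $\mathrm{BA}$ is $\alpha$-winning: this is where the number-theoretic content enters, and it is the step that requires real work rather than just formal properties of the game. The transfer under $C^1$ diffeomorphisms (step (iii)) is also delicate but essentially bookkeeping: on small balls $f$ is bi-Lipschitz with constants close to $|f'|$, so Alice can simulate her strategy on the image side by adjusting $\alpha$ slightly, provided she is allowed to wait until Bob's ball is small enough that the distortion of $f$ is negligible --- a maneuver permitted by the game's invariance under Bob passing to a sub-ball of his own choice.
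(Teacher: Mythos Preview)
The paper does not prove this theorem. It is stated in the introduction purely as motivation, with attribution to Schmidt's original 1966 paper, and no proof (or even sketch) is given anywhere in the text. So there is nothing in the paper to compare your proposal against.

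For what it is worth, your outline is broadly faithful to Schmidt's original argument: one shows that $\mathrm{BA}$ is $\alpha$-winning, uses the countable-intersection property and invariance under suitable maps, and then invokes the full-dimension property of winning sets. The one place where your sketch is a bit loose is the passage from ``each $f_n^{-1}(\mathrm{BA})$ is $\alpha_n$-winning for some $\alpha_n$'' to a common parameter $\alpha_0$: appealing to ``monotonicity'' and ``shrinking $\alpha_0$'' does not by itself work, since if $\alpha_n\to 0$ there would be no common parameter. What Schmidt actually establishes is a stronger transfer principle in which the winning parameter does not degrade (roughly: Alice waits a few moves until Bob's ball is small enough that $f_n$ is nearly affine there, and then plays her $\mathrm{BA}$-strategy through $f_n$; the number of waiting moves depends on $f_n$ but the parameter $\alpha$ does not). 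With that correction your plan is sound, but again, none of this appears in the present paper.
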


Before turning our attention to the determinacy aspects of the game, we devote the first subsection of the introduction to describing the game as well as recent variations introduced by C. T. McMullen \cite{McMullen_absolute_winning}, both of which will be the focus of this paper. In the second subsection of the introduction we state the main results of this paper.

\subsection{Schmidt's game and its variations}
\label{subsectiongames}

Let $(X,\dist)$ be a complete metric space. In what follows, we denote by $B(x,\rho)$ the closed ball in the metric space $(X,\dist)$ centered at $x$ of radius $\rho$, i.e.,
\begin{equation}
\label{ballsdef}
B(x,\rho) \df \{ y \in X : d(x,y) \le \rho\}\,.
\end{equation}
Let $\Omega \df X \times \R_+$ be the set of formal balls in $X$, and define a partial ordering on $\Omega$ by letting
\[
(x_2,\rho_2)\le_{s}(x_1,\rho_1) \text{ if } \rho_2+d(x_1,x_2)\le \rho_1.
\]
We associate to each pair $(x,\rho)$ a ball in $(X,\dist)$ via the `ball' function $B(\cdot ,\cdot)$ as in (\ref{ballsdef}).
Note that the inequality $(x_2,\rho_2)\le_{s}(x_1,\rho_1)$ clearly implies (but is not necessarily implied by) the inclusion $B(x_2,\rho_2) \subset B(x_1,\rho_1)$. Nevertheless, the two conditions are equivalent when $(X,\dist)$ is a Banach space.

Fix $\alpha,\beta\in (0,1)$ and $S\subset X$. The set $S$ will be called the \emph{target set}. Schmidt's $(\alpha,\beta,S)$-game is played by two players, whom we shall call Alice and Bob. The game starts with Bob choosing a pair $\omega_1 = (x_1,\rho_1) \in \Omega$. Alice and Bob then take turns choosing pairs $\omega'_n = (x'_n,\rho'_n)\le_s\omega_n$ and $\omega_{n+1}= (x_{n+1},\rho_{n+1})\le_s\omega'_n$, respectively. These pairs are required to satisfy
\begin{equation}
\label{Schmidt_rules}
\rho_n' = \alpha \rho_n\text{ and }\rho_{n+1} = \beta \rho_n'\,.
\end{equation}
Since the game is played on a complete metric space and since the diameters of the nested  balls
\begin{equation}
\label{nestedballs}
B(\omega_1) \supset  \ldots\supset B(\omega_n) \supset B(\omega'_n) \supset B(\omega_{n + 1}) \supset \ldots
\end{equation}
tend to zero as $n\rightarrow\infty$, the intersection of these balls is a singleton $\{x_\infty\}$. Call Alice the winner if $x_\infty\in S$; otherwise Bob is declared the winner. A \emph{strategy} consists of a description of how one of the players should act based on the opponent's previous moves; see Section \ref{sectiongeneral} for a precise formulation. A strategy is \emph{winning} if it guarantees the player a win regardless of the opponent's moves. If Alice has a winning strategy for Schmidt's $(\alpha,\beta,S)$-game, we say that $S$ is an $(\alpha,\beta)$-\emph{winning} set. If $S$ is $(\alpha,\beta)$-winning for all (equiv. for all sufficiently small) $\beta\in (0,1)$, we say that $S$ is an \emph{$\alpha$-winning} set. If $S$ is $\alpha$-winning for some (equiv. for arbitrarily small) $\alpha\in (0,1)$, we say that $S$ is \emph{winning}.\footnote{To see that ``for all'' and ``for some'' may be replaced by ``for all sufficiently small'' and ``for arbitrarily small'' respectively, cf. \cite[Lemmas 8 and 9]{Schmidt1}.}

We now describe two variations of Schmidt's game introduced by C. T. McMullen \cite{McMullen_absolute_winning}. The first of these variations is the \emph{strong winning} game. Given $\alpha,\beta\in (0,1)$ and $S\subset X$, the rules of the $(\alpha,\beta,S)$-strong winning game are the same as the rules of Schmidt's $(\alpha,\beta,S)$-game, except that the equations \eqref{Schmidt_rules} are replaced by the inequalities
\begin{equation}
\label{strong_winning_rules}
\rho_n' \geq \alpha \rho_n\text{ and }\rho_{n+1} \geq \beta \rho_n'.
\end{equation}
However, since \eqref{strong_winning_rules} is insufficient to ensure that the diameters of the nested balls \eqref{nestedballs} tend to zero, it may happen that the intersection $I \df \bigcap_n B(\omega_n)$ is not a singleton. If this occurs, we call Alice the winner if $I\cap S\neq\emptyset$; otherwise Bob is declared the winner. If Alice has a winning strategy for the $(\alpha,\beta)$-strong winning game with a given target set $S$, then $S$ is called \emph{$(\alpha,\beta)$-strong winning}. Similarly, as above one can talk about $\alpha$-\emph{strong winning} and \emph{strong winning} sets.

McMullen's second variation is called the \emph{absolute winning} game. Although he defined it only on $\R^d$, it can be played on much more general spaces. Fix $c > 0$. A metric space $(X,\dist)$ is said to be \emph{$c$-uniformly perfect} if for every pair $(x,\rho)\in\Omega$ with $\rho\leq 1$, we have
\begin{equation}
\label{cuniformlyperfect}
B(x,\rho)\butnot B(x,c \rho)\neq\emptyset.
\end{equation}
If $(X,\dist)$ is $c$-uniformly perfect for some $c > 0$, then we say that $(X,\dist)$ is \emph{uniformly perfect}. Examples of uniformly perfect spaces include $\R^d$ as well as many fractal subsets including the Cantor ternary set, the Sierpinski triangle, and the von Koch snowflake curve.

Let $(X,\dist)$ be a complete $c$-uniformly perfect metric space, and fix $0 < \beta < c/5$ and $S\subset X$. The \emph{$(\beta,S)$-absolute winning} game is played as follows: As before, Bob begins by choosing a pair $\omega_1\in \Omega$. After Bob's $n$th move $\omega_n = (x_n,\rho_n)$, Alice chooses a pair $\omega_n' = (x_n',\rho_n')\in\Omega$ satisfying $\rho_n' \leq \beta \rho_n$. She is said to ``delete'' the associated ball $B(\omega_n')$. Bob must then choose a ball $\omega_{n + 1} = (x_{n + 1},\rho_{n + 1}) \leq_s \omega_n$ such that
\begin{equation}
\label{absolute_winning_rules}
\rho_{n + 1} \geq \beta \rho_n \text{ and } B(\omega_{n + 1}) \cap B(\omega_n') = \emptyset.
\end{equation}
Such a choice is always possible due to \eqref{cuniformlyperfect}; cf. \cite[Lemma 4.3]{FSU4}. Bob's choices result in a decreasing sequence of sets
\[
B(\omega_1) \supset  \ldots\supset B(\omega_n) \supset B(\omega_{n + 1}) \supset \ldots
\]
As before, let $I = \bigcap_n B(\omega_n)$, and call Alice the winner if $I\cap S\neq\emptyset$. If Alice has a winning strategy for the $(\beta,S)$-absolute winning game, then $S$ is called \emph{$\beta$-absolute winning}. If $S$ is $\beta$-absolute winning for all $0 < \beta < c/5$, then we say $S$ is \emph{absolute winning}.

\begin{remark}
If $(X,\dist)$ is a Banach space, then the $(\beta,S)$-absolute winning game may be played for any $0 < \beta < 1/3$; the hypothesis $\beta < c/5$ is not necessary. For a proof, see for example the proof of Theorem \ref{maintheorem}(iv).
\end{remark}

\begin{remark}
In the absolute winning game, Alice has rather limited control over the situation, since she can block very few of Bob's possible moves at each step. Thus, being absolute winning is a rather strong property of a set.
\end{remark}

The following proposition summarizes some important properties of winning, strong winning, and absolute winning subsets of a complete uniformly perfect metric space $(X,\dist)$:
\begin{proposition}
\label{properties}
~
\begin{itemize}
\item[(i)] Winning (resp.,  strong winning, absolute winning) sets are dense. 
\item[(ii)] Absolute winning implies strong winning, and strong winning implies winning.
\item[(iii)]  The countable intersection of $\alpha$-winning (resp., $\alpha$-strong winning, absolute winning) sets is again $\alpha$-winning (resp.,  $\alpha$-strong winning, absolute winning).
\end{itemize}
\end{proposition}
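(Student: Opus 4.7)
For part (i), density is immediate from considering a play in which Bob opens inside the given open set. Given any nonempty open $U\subset X$, pick a formal ball $\omega_1 \in \Omega$ with $B(\omega_1) \subset U$. Alice's winning strategy must beat every Bob play, including the one that opens with $\omega_1$; the resulting nested intersection lies in $B(\omega_1) \subset U$ and meets $S$ (or, in Schmidt's original game, reduces to a singleton in $S$). Hence $S\cap U \neq \emptyset$.

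For part (ii), both implications are obtained by simulating the stronger game inside the weaker one. For strong $\Rightarrow$ winning, let $\sigma^s$ be a strong winning strategy for $(\alpha,\beta)$, and have Alice play the regular $(\alpha,\beta)$-game while maintaining a ``virtual'' strong-game history in parallel: her virtual move is whatever $\sigma^s$ prescribes---a ball $(x_n^s,\rho_n^s)$ with $\rho_n^s \geq \alpha\rho_n$---and the virtual Bob move at step $n+1$ is defined as the concentric thickening of his real move to radius $\beta\rho_n^s$. In reality Alice plays $(x_n^s,\alpha\rho_n)$. A brief induction (using that real Bob's radius is bounded above by virtual Bob's radius) shows the virtual history is a legal strong-game play: the only nontrivial verification reduces to the inequality $\alpha\rho_n \leq \rho_n^s$. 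Then $\sigma^s$'s winning property gives $I^{\text{virt}}\cap S \neq \emptyset$, and since $\rho_n^s \leq \beta^{n-1}\rho_1 \to 0$ the virtual intersection is a singleton---necessarily equal to the real-game limit point, which therefore lies in $S$. For absolute $\Rightarrow$ strong, I would invoke the $c$-uniform perfectness \eqref{cuniformlyperfect}: given Bob's strong-game move $\omega_n$, consult an absolute winning strategy $\sigma^a$ to identify the small ball $\omega_n^{\mathrm{del}}$ it would delete, and choose Alice's strong-game move $\omega_n' \leq_s \omega_n$ of radius $\geq \alpha\rho_n$ to be disjoint from $B(\omega_n^{\mathrm{del}})$; for small enough $\alpha$ such a choice exists by \eqref{cuniformlyperfect}. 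Bob's subsequent strong-game move is then a legal move in the simulated absolute game (for $\beta$ chosen comparable to $\alpha$), and $\sigma^a$'s winning property yields $I \cap S \neq \emptyset$.

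For part (iii), I would use Schmidt's diagonal scheduling. Fix $\alpha$ and $\beta$, and partition the positive integers into infinite arithmetic progressions $T_n$ of constant spacing $k_n = 2^n$ (take $T_n$ to be the positive odd multiples of $2^{n-1}$). By $\alpha$-winningness of $S_n$, pick a winning strategy $\sigma_n$ for $S_n$ in the $(\alpha,\beta_n')$-game, where $\beta_n' := (\alpha\beta)^{k_n}/\alpha \in (0,1)$. In the main $(\alpha,\beta)$-game for $\bigcap_n S_n$, Alice at main-turn $t \in T_n$ plays $\sigma_n$'s response to the sub-history consisting of the $T_n$-turns. Because main-game radii shrink by $(\alpha\beta)^{k_n} = \alpha\beta_n'$ over each block of $k_n$ turns, each sub-play is a legal $(\alpha,\beta_n')$-game whose nested Bob balls form a subsequence of the main sequence; its limit therefore lies in $S_n$ by $\sigma_n$'s winning property, and coincides with the main-game limit. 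Hence the main-game limit lies in every $S_n$. The main technical burden here is bookkeeping: verifying that the centers and radii of the various sub-games align across scales. For $\alpha$-strong winning and absolute winning, analogous schedulings work, with Alice additionally using minimum-radius strategies (as in part (ii)) so that all intersections are singletons and ``meets every $S_n$'' strengthens to ``lies in every $S_n$.''
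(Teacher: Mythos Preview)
The paper itself offers no argument here: it says (i) follows from the definition, cites McMullen for (ii), and cites Schmidt for (iii). Your sketch supplies real content, and parts (i) and (iii) are correct --- (i) is the obvious observation, and (iii) is exactly Schmidt's splitting argument, carried out with the right choice $\beta_n' = (\alpha\beta)^{k_n}/\alpha$.

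There is, however, a genuine gap in your simulation for ``strong winning $\Rightarrow$ winning'' in (ii). You verify that the \emph{virtual} strong-game history is legal, but you never check that Alice's \emph{real} Schmidt move $(x_n^s,\alpha r_n)$ is legal, i.e.\ that $d(x_n^s,x_n^{\mathrm{Bob}}) \le (1-\alpha)r_n$, where $r_n$ is Bob's real radius. From the virtual game you only get $d(x_n^s,x_n^{\mathrm{Bob}}) \le R_n - R_n' \le (1-\alpha)R_n$, with $R_n$ the virtual Bob radius; since your induction gives $R_n \ge r_n$, this bound goes the wrong way. Concretely: take $\alpha=\beta=\tfrac14$, $r_1=1$, and suppose $\sigma^s$ returns $R_1' = \tfrac12$. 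Then $R_2 = \beta R_1' = \tfrac18$ while $r_2 = \alpha\beta r_1 = \tfrac{1}{16}$. The virtual constraint allows $d(x_2^s,x_2^{\mathrm{Bob}})$ as large as $R_2 - \alpha R_2 = \tfrac{3}{32}$, but legality in the real Schmidt game requires $d \le (1-\alpha)r_2 = \tfrac{3}{64}$. So the center $x_n^s$ prescribed by $\sigma^s$ can lie outside the set of admissible Schmidt centers, and your proposed real move need not exist. The difficulty is real: a strong-winning strategy may exploit large radii, and shrinking Alice's move to radius $\alpha r_n$ while keeping the same center is not always possible once the virtual and real scales diverge. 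The argument needs to be reworked (or replaced by the one in McMullen's paper) to control the center as well as the radius.
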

\begin{proof}
Indeed, (i) follows directly from the definition. The proof of (ii) is fairly straightforward (cf. \cite[p. 2-3]{McMullen_absolute_winning}), and the basic idea of the proof of (iii) can be found in \cite[Theorem 2]{Schmidt1}.
\end{proof}

\begin{remark}
In \cite[Theorem 7]{Schmidt1}, Schmidt proved that for a large class of games including the above games, the existence of a winning strategy implies the existence of a \emph{positional} winning strategy, i.e. a winning strategy in which a player's moves depend only on his opponent's previous move, and not on the entire history.
\end{remark}

\subsection{Determinacy of games}
\label{subsectionmain}

Inspired by the Banach--Mazur game, infinite games have been studied for almost a century. In 1953, the notion of two-player zero-sum infinite games with perfect information was introduced and systematically studied by D. Gale and F. M. Stewart \cite{GaleStewart}. As stated above, the main purpose of this paper is to study the determinacy of Schmidt's game and its variations. (For set theoretic results of a somewhat different flavor concerning the game on the real line, see \cite{KysiakZoli}). A two-player game is called \emph{determined} if one of the players has a winning strategy.

As a corollary of D. A. Martin's celebrated Borel determinacy theorem \cite{Martin_determinacy}, we deduce the following (see Theorem \ref{theoremdeterminedgeneral}):

\begin{theorem}
\label{theoremdetermined}
Let $(X,\dist)$ be a complete metric space. Fix $0 < \alpha,\beta < 1$ and $S\subset X$. If $S$ is Borel, then Schmidt's $(\alpha,\beta,S)$-game, the $(\alpha,\beta,S)$-strong winning game, and the $(\beta,S)$-absolute winning game are determined. (In the last case, we assume that $X$ is $c$-uniformly perfect with $\beta < c/5$.)
\end{theorem}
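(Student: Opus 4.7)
The plan is to invoke Martin's Borel determinacy theorem \cite{Martin_determinacy} after recasting each of the three games as a Gale--Stewart game on the set $\Omega = X \times \R_+$ and verifying that the payoff set is Borel in the product topology on $\Omega^\omega$ in which $\Omega$ carries the discrete topology --- the setting in which Martin's theorem applies to games on an arbitrary set of moves.

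First I would reformulate each game so that a play is an infinite sequence $(\omega_1,\omega_1',\omega_2,\omega_2',\ldots) \in \Omega^\omega$, adopting the standard convention that a player making the first illegal move (i.e., violating $\omega_n' \leq_s \omega_n$, $\omega_{n+1} \leq_s \omega_n'$, the prescribed relation between consecutive radii, or, in the absolute game, $B(\omega_{n+1})\cap B(\omega_n') = \emptyset$) immediately loses; this transformation preserves determinacy. Each legality condition depends on only finitely many coordinates and cuts out a clopen subset of $\Omega^\omega$, so the set of legal plays is $G_\delta$.

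For Schmidt's $(\alpha,\beta,S)$-game, the rules force $\rho_n \le (\alpha\beta)^{n-1}\rho_1 \to 0$, so $(x_n)$ is Cauchy and the map $x_\infty : \Omega^\omega \to X$ is continuous (changing coordinates past the $N$th leaves $x_\infty$ in a ball of radius $(\alpha\beta)^{N-1}\rho_1$). Hence for Borel $S$ the set $\{x_\infty \in S\}$ is Borel, and Martin's theorem directly yields determinacy. For the strong and absolute winning games, however, the intersection $I = \bigcap_n B(\omega_n)$ need not be a singleton, so the payoff ``$I \cap S \neq \emptyset$'' is a priori only $\Sigma^1_1$, being the projection onto $\Omega^\omega$ of the Borel set
\[
\{(\vec\omega,y) \in \Omega^\omega \times X : y \in S \text{ and } y \in B(\omega_n) \text{ for every } n\}.
\]
This is the main obstacle, and I expect it to be where the substantial work lies.

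To surmount it I would pass to an auxiliary game in which, at each of her turns, Alice additionally announces a point $\hat{y}_n \in B(\omega_n)$, declaring her the winner precisely when $(\hat{y}_n)$ is Cauchy with limit in $S$. The payoff is manifestly Borel, and the forward direction of the equivalence with the original game is immediate: if $(\hat{y}_n)$ converges to $y \in S$, then $y$ lies in every $B(\omega_m)$ (which is closed, and contains $\hat{y}_n$ for $n \ge m$) and hence in $I \cap S$. For the backward direction, given a winning strategy for Alice in the original game, one must construct the $\hat{y}_n$'s adaptively; the idea is to exploit the fact that the centers $x_n$ are automatically Cauchy (from $d(x_{n+1},x_n) \le \rho_n - \rho_{n+1}$ together with the monotonicity of $(\rho_n)$) and to use a Borel selection in which Alice, employing foreknowledge of her own strategy, chooses $\hat{y}_n \in B(\omega_n)$ approximating a witness of $I \cap S \neq \emptyset$. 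The symmetric statement for Bob goes through analogously. Once the equivalence is in place, Martin's theorem applied to the auxiliary game transfers determinacy back. The technical heart of the argument is the backward direction of this equivalence: one must commit, at each finite stage, to an approximation of a limit point whose very existence is a statement about the unrevealed future of the play.
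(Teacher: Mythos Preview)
Your treatment of Schmidt's original $(\alpha,\beta,S)$-game is fine and matches the paper: the radii are forced to zero, the outcome map is continuous, and Martin's theorem applies directly.

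For the strong and absolute games, however, your auxiliary-game approach has a real gap at exactly the point you flag. The ``backward direction'' --- producing, from a winning strategy for Alice in the original game, a winning strategy in the game where she must announce approximations $\hat y_n$ to a point of $I\cap S$ --- cannot be completed by the devices you mention. Announcing the centers $x_n$ gives a Cauchy sequence with limit $x_\infty\in I$, but nothing forces $x_\infty\in S$; ``Borel selection'' and ``foreknowledge of her own strategy'' do not help, because at stage $n$ the set $I$ still depends on Bob's future moves, and different continuations may require different witnesses. Indeed, your unfolding would, if it worked, prove determinacy for arbitrary $\Sigma^1_1$ payoffs from Borel determinacy alone --- which is false in ZFC. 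So something specific to these games must be used.

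The paper's route avoids the issue entirely by a case split on whether $S$ is dense. If $S$ is not dense, Bob wins on his first move by choosing a ball disjoint from $\cl S$. If $S$ is dense, then one checks that whenever the diameters do \emph{not} tend to zero, the intersection $I$ contains a ball of positive radius (the centers are Cauchy with limit $x$, and $I\supset B(x,\lim_n\rho_n)$), which therefore meets $S$; hence $E_\Gamma^\N\setminus Z\subset \SS_{I,S}$. On $Z$ the outcome is a singleton and the map $\iota:Z\to X$ is continuous, so $\SS_{I,S}=(E_\Gamma^\N\setminus Z)\cup\iota^{-1}(S)$ is genuinely Borel, and Martin's theorem applies. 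The key observation you are missing is this density dichotomy, which turns an apparently $\Sigma^1_1$ payoff into a Borel one.
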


We may ask whether these games remain determined when $S$ is not a Borel set. In this paper, we consider the worst case scenario: when $S$ is a \emph{Bernstein set}, meaning that every closed perfect subset of $X$ intersects both $S$ and $X\butnot S$. Bernstein sets are pathological in that they are not measurable with respect to any non-atomic regular measure and do not have the Baire property \cite[Theorem 5.4]{Oxtoby}. Every uncountable Polish space contains a Bernstein subset \cite[Theorem 5.3]{Oxtoby}.\footnote{Both of the above results are stated for $\R$ but easily generalize. In the latter result, the assumption that $X$ is an uncountable Polish space guarantees that the number of closed subsets of $X$ is equal to $\#(X)$.} We may now state our main theorem (see \sectionsymbol\ref{subsectionindeterminacy} for the proof):

\begin{theorem}
\label{maintheorem}
Let $(X,\dist)$ be a complete $c$-uniformly perfect metric space, and let $S\subset X$ be a Bernstein subset. Fix $0 < \alpha,\beta < 1$.
\begin{itemize}
\item[(i)] If $\alpha,\beta < c/(1 + 2c)$, then Schmidt's $(\alpha,\beta,S)$-game and the $(\alpha,\beta,S)$-strong winning game are undetermined.
\item[(ii)] If $(X,\dist)$ is a Banach space and if $1 + \alpha\beta > 2\max(\alpha,\beta)$, then Schmidt's $(\alpha,\beta,S)$-game and the $(\alpha,\beta,S)$-strong winning game are undetermined.
\item[(iii)] If $\beta < (c/5)^2$, then the $(\beta,S)$-absolute winning game is undetermined.
\item[(iv)] If $(X,\dist)$ is a Banach space and if $\beta < 1/3$, then the $(\beta,S)$-absolute winning game is undetermined.
\end{itemize}
\end{theorem}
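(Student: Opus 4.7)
All four items are proved by the standard perfect-set argument for Bernstein-set indeterminacy. Given a putative winning strategy $\sigma$ for either player in one of the four games with target set $S$, let $A_\sigma \subseteq X$ denote the set of all outcomes (the points $x_\infty$, or in the set-valued variants the points of $\bigcap_n B(\omega_n)$) arising when $\sigma$ is followed and the opponent varies over all legal sequences of moves. The plan is to construct a closed perfect set $P \subseteq A_\sigma$; the Bernstein property of $S$ then forces $P$ to meet both $S$ and $X \setminus S$, contradicting the supposed winning status of $\sigma$, whether it was used by Alice or by Bob. Applying this construction to strategies of both players establishes undeterminacy.

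To build $P$, I would construct an explicit Cantor scheme $\{\omega^w : w \in 2^{<\omega}\}$ of legal moves by the opponent, such that for every node $w$, the two extensions $\omega^{w0}, \omega^{w1}$ lead --- after further play against $\sigma$ --- to outcomes confined to disjoint closed subsets of $X$. Since radii decrease geometrically (and, in the set-valued games, I would first restrict the ``singleton-forcing'' player to moves saturating \eqref{strong_winning_rules} or \eqref{absolute_winning_rules} so that $\bigcap_n B(\omega_n)$ is a single point), this scheme yields a continuous injection $2^\omega \to X$ whose image is a perfect subset of $A_\sigma$.

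The combinatorial heart of the argument is the branching step, which is where the parameter hypotheses enter. In (i) and (iii), I would apply $c$-uniform perfectness \eqref{cuniformlyperfect} inside the admissible region produced by the previous move to locate two well-separated candidate moves, with the thresholds $\alpha,\beta < c/(1+2c)$ and $\beta < (c/5)^2$ calibrated so that the resulting separation exceeds twice the next-level radius --- possibly only after a two-round look-ahead that simultaneously exploits both the $\alpha$- and $\beta$-conditions. In the Banach cases (ii) and (iv), the linear structure permits an extremal antipodal separation of size $2r$ across any ball of radius $r$, and the conditions $1 + \alpha\beta > 2\max(\alpha,\beta)$ and $\beta < 1/3$ precisely capture when two antipodal moves remain legal and mutually disjoint. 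The main obstacle I anticipate is the bookkeeping in the branching step for (i): the strictness of $\le_s$ over $\subset$ in general metric spaces (noted after \eqref{ballsdef}) prevents a naive radius-subtraction from reaching the optimal threshold $c/(1+2c)$, and the two-round look-ahead --- which produces disjointness at the grandchild level rather than immediately --- is the device that delivers the sharp inequality stated in the theorem.
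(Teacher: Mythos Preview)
Your high-level plan---build a Cantor scheme of opponent plays against a putative winning strategy $\sigma$, obtain a closed perfect set inside $A_\sigma$, and invoke the Bernstein property---is exactly the paper's strategy, and the paper likewise handles non-singleton outcomes in the strong and absolute games by having the varying opponent force radii to zero. The paper packages this as an abstract result (Theorem~\ref{theoremindeterminacy}): any game that is \emph{shrinking} (either player can force $\diam\to 0$) and \emph{nondegenerate} (either player can avoid any prescribed point $y\in X$) is undetermined on Bernstein sets; the four items then reduce to verifying these two properties.

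Where your proposal diverges is the branching mechanism. You aim to produce, at a single step (or after a two-round look-ahead), two \emph{disjoint} legal moves for the opponent. The paper does something both simpler and sharper: it branches by first letting the opponent play a shrinking strategy against $\sigma$ down to a single point $x$, and then, from the same finite position, letting the opponent play a strategy that \emph{avoids} $x$; the second play lands at some $y\neq x$, and truncating both plays at a fine enough stage gives the two disjoint branches (Lemma~\ref{lemmasplit}). Thus the geometric burden is only nondegeneracy---avoiding \emph{one} point---not manufacturing two disjoint balls simultaneously.

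This distinction is what delivers the stated constants. A one-step ``two disjoint balls of radius $\alpha\rho$ inside $B(x,\rho)$'' argument via uniform perfectness yields only $\alpha<c/(2+c)$, strictly weaker than $c/(1+2c)$ for $c<1$, and a two-round look-ahead does not obviously close the gap, since after the $\sigma$-player's intervening move you face the same problem at the new scale. The paper's avoid-a-point computation (if $y\in B(x,\alpha\rho)$, apply \eqref{cuniformlyperfect} to $B(y,(1-2\alpha)\rho)$) gives exactly $\alpha<c/(1+2c)$. Likewise, in (ii) the condition $1+\alpha\beta>2\max(\alpha,\beta)$ is \emph{not} the condition for two antipodal balls to be disjoint (that would simply be $\alpha<1/2$); it is Schmidt's Lemma~7, an avoid-a-point statement whose proof is a multi-step drift rather than a single antipodal split. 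Finally, you underestimate (iv): Bob's ability to avoid a point in the absolute game on a Banach space is genuinely delicate, because Alice's deletion may kill one of the two antipodal candidates at every stage. The paper needs a two-phase argument in which Bob first places his center outside a measure-zero Cantor-type set $S_\rho$, so that the forced $\pm$-walk thereafter can never land on the target point.
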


\begin{remark}
The hypotheses of Theorem \ref{maintheorem} are irrelevant in applications, since one generally cares only about small values of $\alpha$ and $\beta$ (cf. the definitions of $\alpha$-winning and winning sets). Moreover, if $1 + \alpha\beta \leq 2\max(\alpha,\beta)$, then Schmidt's $(\alpha,\beta,S)$-game and the $(\alpha,\beta,S)$-strong winning game are determined for all $S$ by \cite[Lemmas 5 and 6]{Schmidt1}. Thus if $(X,\dist)$ is a Banach space, then the hypotheses of Theorem \ref{maintheorem} cannot be weakened.
\end{remark}

\begin{remark}
One may define \emph{Schmidt's $S$-game} as follows: First Alice chooses $\alpha\in(0,1)$, then Bob chooses $\beta\in(0,1)$, and then Alice and Bob play Schmidt's $(\alpha,\beta,S)$-game. Note that a set $S$ is winning if and only if Alice has a winning strategy for Schmidt's $S$-game. It follows immediately from Theorem \ref{maintheorem} that Schmidt's $S$-game is undetermined whenever $S$ is a Bernstein set. Similar statements may be made about the strong and absolute winning games.
\end{remark}

{\bf Acknowledgements.} The first-named author was supported in part by the Simons Foundation grant \#245708.

\section{A more general context}
\label{sectiongeneral}

To prove Theorems \ref{theoremdetermined} and \ref{maintheorem}, we will introduce a larger class of games which includes the games of \sectionsymbol\ref{subsectiongames}. Before defining this class, we introduce an auxiliary class, the class of \emph{Gale--Stewart} games.

\subsection{Gale--Stewart games}

Let $E$ be a nonempty set, and let $E^*$ and $E^\N$ denote the set of finite and infinite words with alphabet $E$, respectively. Elements of $E^*\cup E^\N$ will be called \emph{plays}. For each play $\omega\in E^*\cup E^\N$, we will denote the length of $\omega$ by $|\omega|$, so that $|\omega| = \infty$ if $\omega\in E^\N$.

Let $\PP^*(E)$ denote the collection of nonempty subsets of $E$, and fix a map $\Gamma:\coprod_{n\in\N}E^{n - 1}\to \PP^*(E)$. The map $\Gamma$ will be called the \emph{ruleset}. A play $\omega\in E^*\cup E^\N$ will be called \emph{legal} if $\omega_n \in \Gamma(\omega_1,\ldots,\omega_{n - 1})$ for all $n\leq |\omega|$. The set of all legal plays will be denoted $E_\Gamma^*\cup E_\Gamma^\N$.

Fix a set $\SS\subset E_\Gamma^\N$ (the \emph{target set}). Alice and Bob play the $(E,\Gamma,\SS)$-\emph{Gale--Stewart game} as follows: Alice and Bob alternate choosing elements $\omega_1,\omega_2,\ldots \in E$, with the odd elements chosen by Bob and the even elements by Alice. The elements they choose must satisfy $(\omega_1,\ldots,\omega_n)\in E_\Gamma^*$ for all $n\in\N$. Such a choice is always possible. If the resulting play $\omega = (\omega_1,\omega_2,\ldots)\in E^\N$ is in $\SS$, then Alice wins; otherwise Bob wins.

An \emph{Alice-strategy} for a Gale--Stewart game $(E,\Gamma,\SS)$ is a map $\sigma_A:\coprod_{n = 0}^\infty E_\Gamma^{2n + 1} \to E$ such that $\sigma_A(\omega)\in \Gamma(\omega)$ for all $\omega\in \coprod_{n = 0}^\infty E_\Gamma^{2n + 1}$. Similarly, a \emph{Bob-strategy} is a map $\sigma_B: \coprod_{n = 0}^\infty E_\Gamma^{2n} \to E$ such that $\sigma_B(\omega)\in \Gamma(\omega)$ for all $\omega\in \coprod_{n = 0}^\infty E_\Gamma^{2n}$. Given any two strategies $\sigma_A$ and $\sigma_B$, there is a unique infinite play $\omega\in E_\Gamma^\N$ such that for each $n\in\N$,
\begin{equation}
\label{omeganformula}
\omega_n = \begin{cases}
\sigma_A(\omega_1,\ldots,\omega_{n - 1}) & n \text{ even}\\
\sigma_B(\omega_1,\ldots,\omega_{n - 1}) & n \text{ odd}
\end{cases}.
\end{equation}
This play represents the game which results if the players Alice and Bob play according to the strategies $\sigma_A$ and $\sigma_B$ respectively. It will be denoted $(\sigma_A,\sigma_B)$. If $(\sigma_A,\sigma_B)\in \SS$ for every Bob-strategy $\sigma_B$, then the Alice-strategy $\sigma_A$ is said to be \emph{winning} for the $(E,\Gamma,\SS)$-Gale--Stewart game. We will equivalently say that $\sigma_A$ \emph{ensures that $\phi\in \SS$}. Here we think of $\phi$ as denoting the outcome of an arbitrary game. Conversely, if $(\sigma_A,\sigma_B)\notin \SS$ for every Alice-strategy $\sigma_A$, then the Bob-strategy $\sigma_B$ is said to be winning for the $(E,\Gamma,\SS)$ game, and we say that the strategy ensures that $\phi\notin \SS$. Clearly, both players cannot possess a winning strategy. The $(E,\Gamma,\SS)$-Gale--Stewart game is said to be \emph{determined} if one of the players possesses a winning strategy.

Alternatively, we may characterize the notion of winning strategies in terms of compatible plays.

\begin{notation}
Given $\omega,\tau\in E^*\cup E^\N$, we write $\omega\preceq\tau$ if $\tau$ is an extension of $\omega$, i.e. if $\omega_n = \tau_n$ for all $n\leq |\omega|$.
\end{notation}

\begin{definition}
\label{definitionsigmacompatible}
Let $\sigma_A$ be an Alice-strategy. A play $\omega\in E_\Gamma^*\cap E_\Gamma^\N$ is called \emph{$\sigma_A$-compatible} if $\omega_{2n} = \sigma_A(\omega_1, \ldots , \omega_{2n - 1})$ whenever $2n \leq |\omega|$. Similarly, given a Bob-strategy $\sigma_B$, a play $\omega  \in E_\Gamma^* \cup E_\Gamma^\N$ is called \emph{$\sigma_B$-compatible} if $\omega_{2n + 1} = \sigma_B(\omega_1, \ldots , \omega_{2n})$ whenever $2n + 1 \leq |\omega|$.

Given a strategy $\sigma$, denote by $E_\sigma^*$ (resp. $E_\sigma^\N$) the set of $\sigma$-compatible plays in of $E_\Gamma^*$ (resp. $E_\Gamma^\N$).
\end{definition}

\begin{remark}
Under this definition, an Alice-strategy $\sigma_A$ is winning if and only if $E_{\sigma_A}^\N\subset \SS$, and a Bob-strategy $\sigma_B$ is winning if and only if $E_{\sigma_B}^\N \cap \SS = \emptyset$.
\end{remark}

One of the main problems in game theory is to classify which Gale--Stewart games are determined. The best general result in this regard is the following:

\begin{theorem}[D. A. Martin \cite{Martin_determinacy}]
\label{theoremboreldeterminacy}
Let $E$ be a nonempty set, and let $\SS\subset E_\Gamma^\N$ be a Borel set, where $E_\Gamma^\N$ is viewed as a subspace of $E^\N$ equipped with the product topology (viewing $E$ as a discrete topological space). Then the $(E,\Gamma,\SS)$-Gale--Stewart game is determined.
\end{theorem}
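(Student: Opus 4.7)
The plan is to follow D. A. Martin's original argument, which reduces Borel games to the clopen case via a transfinite ``unravelling'' of the payoff set along auxiliary games of increasing complexity. First, I would establish the base case (essentially Gale--Stewart's original theorem): if $\SS$ is closed in $E_\Gamma^\N$ then the $(E,\Gamma,\SS)$-game is determined. One considers the set $W$ of finite legal plays from which Alice possesses a ``non-losing quasi-strategy,'' and argues that either the empty play lies in $W$ (yielding a winning strategy for Alice) or Bob can force the play to stay outside $W$, and hence, by closedness of $\SS$, outside $\SS$.

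Next, I would introduce the key notion of a \emph{covering} $\pi: \tilde G \to G$ of Gale--Stewart games, where $\tilde G = (\tilde E, \tilde\Gamma, \tilde\SS)$ is an auxiliary game with a continuous surjection $\pi: \tilde E^\N_{\tilde\Gamma} \to E^\N_\Gamma$ that factors through finite partial plays and enjoys the \emph{pushforward property}: any strategy for either player in $\tilde G$ can be transformed into a strategy for the same player in $G$ whose resulting play is always a $\pi$-image of some $\tilde G$-play consistent with the auxiliary strategy. The covering is called an \emph{unravelling} of a set $A\subset E^\N_\Gamma$ if $\pi^{-1}(A)$ is clopen in $\tilde E^\N_{\tilde\Gamma}$. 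Combined with the base case, if $\SS$ admits an unravelling then $\tilde G$ is determined, and the pushforward property transfers the winning strategy back to $G$. So it remains to show that every Borel set admits an unravelling.

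Third, I would prove this by transfinite induction along the Borel hierarchy. Clopen sets are unravelled by the identity cover, and complementation is automatic (the preimage and its complement are clopen together, and the pushforward property is symmetric in the players). The essential step is closure under countable intersections: given unravellings $\pi_n: \tilde G_n \to G$ of sets $\SS_n$, one must construct a single cover $\pi: \tilde G \to G$ that simultaneously unravels every $\SS_n$. Martin accomplishes this by an iterated inverse-limit construction whose auxiliary moves at stage $n$ encode, alongside a move of $G$, a move of each $\tilde G_k$ together with a quasi-strategy for the tail of $\tilde G_n$; the pushforward property for the limit is then verified by a diagonal argument coordinating the individual pushforwards through the $\pi_n$.

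The principal obstacle is precisely this inverse-limit step, both in engineering the auxiliary alphabet so that strategies still push forward and in showing that the preimage of $\bigcap_n \SS_n$ becomes clopen in the new auxiliary tree. Since each level of the Borel hierarchy roughly bumps the cardinality of the auxiliary alphabet up by one power set, the complete induction consumes $\omega_1$ iterations of the power set operation --- a consumption H. Friedman proved unavoidable, explaining why this theorem lies considerably deeper than Gale--Stewart. Once unravellings exist for all Borel payoffs, the unravelling criterion from the second step yields determinacy of $(E,\Gamma,\SS)$ for every Borel $\SS$, completing the proof.
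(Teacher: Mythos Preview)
The paper does not prove this theorem at all; it is simply stated with a citation to Martin~\cite{Martin_determinacy} and then used as a black box in the proof of Theorem~\ref{theoremdeterminedgeneral}. So there is nothing in the paper to compare your proposal against.

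Your outline is a recognizable high-level sketch of Martin's original argument (Gale--Stewart for the closed case, the covering/unravelling machinery, and the transfinite induction through the Borel hierarchy via an inverse-limit construction). As a summary of the literature proof it is broadly accurate, though of course each of the steps you describe---particularly the construction of the simultaneous unravelling and the verification of the pushforward property at the limit---hides substantial technical work that a genuine proof would need to supply. For the purposes of this paper, however, no such proof is expected: the result is quoted, not proven, and you should treat it the same way.
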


On the other hand, the following result is well-known (e.g. \cite[p. 137, paragraph 8]{Kechris}):

\begin{proposition}
\label{propositiongalestewartundetermined}
If $\SS$ is a Bernstein set and if $\Gamma(\omega) = E$ for all $\omega\in \coprod_{n\in\N}E^{n - 1}$, then the $(E,\Gamma,\SS)$-Gale--Stewart game is undetermined.
\end{proposition}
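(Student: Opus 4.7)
The plan is to argue by contradiction: a winning strategy for either player would furnish a nonempty closed perfect subset of $E^\N$ lying entirely inside $\SS$ or entirely inside its complement, and this contradicts the Bernstein property of $\SS$. We equip $E^\N$ with the product topology coming from the discrete topology on $E$. Note that $E_\Gamma^\N = E^\N$ here because $\Gamma$ imposes no restriction on moves, and we may assume $|E|\geq 2$, since otherwise $E^\N$ is a singleton and admits no Bernstein subset.

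Suppose first that Alice has a winning strategy $\sigma_A$, so that $E_{\sigma_A}^\N\subset \SS$ by the remark following Definition~\ref{definitionsigmacompatible}. The set $E_{\sigma_A}^\N$ is closed in $E^\N$ because it equals
\[
\bigcap_{k\geq 1}\bigl\{\omega\in E^\N : \omega_{2k} = \sigma_A(\omega_1,\ldots,\omega_{2k-1})\bigr\},
\]
an intersection of clopen cylinder sets. To see that $E_{\sigma_A}^\N$ is perfect, fix $\omega\in E_{\sigma_A}^\N$ and $N\in\N$, pick an odd integer $m>N$, choose $e\in E\setminus\{\omega_m\}$, and define $\omega'$ by $\omega'_n = \omega_n$ for $n<m$, $\omega'_m = e$, $\omega'_{2k} = \sigma_A(\omega'_1,\ldots,\omega'_{2k-1})$ for every even $2k>m$, and $\omega'_n$ arbitrary in $E$ for every odd $n>m$. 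Then $\omega'\in E_{\sigma_A}^\N$ agrees with $\omega$ on the first $N$ coordinates but differs from $\omega$ at position $m$, so $\omega$ is not isolated. Hence $E_{\sigma_A}^\N$ is a nonempty closed perfect subset of $E^\N$ entirely contained in $\SS$, violating the Bernstein property.

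The case in which Bob has a winning strategy $\sigma_B$ is symmetric: by the same remark we now have $E_{\sigma_B}^\N\cap\SS = \emptyset$, and an identical argument (replacing even indices by odd ones in the perfection step) shows that $E_{\sigma_B}^\N$ is a nonempty closed perfect subset of $E^\N$ disjoint from $\SS$, again contradicting the Bernstein property. The only step that requires genuine care is the verification that the compatible-play sets are perfect, and it rests entirely on the fact that the non-strategizing player retains complete freedom at each of his or her moves, so one can always modify a single such move to produce a distinct compatible play agreeing on an arbitrarily long prefix; everything else is a direct unwinding of the definitions.
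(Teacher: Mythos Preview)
Your argument is correct. The paper does not supply its own proof of this proposition; it merely records it as well known, with a pointer to \cite[p.~137]{Kechris}. So there is nothing in the paper to compare against, and your direct verification that the set $E_{\sigma_A}^\N$ (respectively $E_{\sigma_B}^\N$) of plays compatible with a putative winning strategy is a nonempty closed perfect subset of $E^\N$ is exactly the standard route.

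Two small cosmetic points. First, the sets $\{\omega:\omega_{2k}=\sigma_A(\omega_1,\ldots,\omega_{2k-1})\}$ are not single cylinders but unions of basic cylinders of length $2k$; they are clopen for that reason, so your closedness claim stands. Second, your dismissal of the case $|E|=1$ is slightly miscast: in a one-point space the Bernstein condition is vacuous (there are no nonempty perfect subsets), so strictly speaking every subset is Bernstein and the proposition fails there. This is a harmless degeneracy that the paper's citation implicitly excludes, but if you want the edge case handled cleanly you should say that the hypothesis is empty when $|E|=1$ rather than that no Bernstein set exists.
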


\subsection{The games of \sectionsymbol\ref{subsectiongames} as Gale--Stewart games}
\label{subsectionreinterpretation}

Based on Theorem \ref{theoremboreldeterminacy} and Proposition \ref{propositiongalestewartundetermined}, one might guess that the games described in \sectionsymbol\ref{subsectiongames} exhibit the same behavior, i.e. that they are determined on Borel sets but undetermined on Bernstein sets. And indeed, Theorems \ref{theoremdetermined} and \ref{maintheorem} shows that this is essentially correct. To prove these theorems, we will relate the games of \sectionsymbol\ref{subsectiongames} to Gale--Stewart games.

Let $(X,\dist)$ be a complete metric space. Fix $\alpha,\beta\in(0,1)$ and $S\subset X$, and consider Schmidt's $(\alpha,\beta,S)$-game. We will consider a corresponding Gale--Stewart game which is determined if and only if Schmidt's $(\alpha,\beta,S)$-game is determined. Let $E = \Omega$ be the set of formal balls in $X$. Define the ruleset $\Gamma = \Gamma_{\alpha,\beta}:\coprod_{n\in\N} E^n\to E$ according to the rules of Schmidt's $(\alpha,\beta)$-game; namely, $E\ni \omega_n\in \Gamma(\omega_1,\ldots,\omega_{n - 1})$ if and only if
\[
\omega_n \leq_s \omega_{n - 1} \text{ and } \rho_n = \begin{cases}
\alpha \rho_{n - 1} & n \text{ even}
\\ \beta \rho_{n - 1} & n \text{ odd}
\end{cases}.
\]
Let
\[
\SS_{\alpha,\beta} = \left\{\omega\in E_\Gamma^\N : \bigcap_{n\in\N}B(\omega_{2n + 1}) \cap S\neq\emptyset\right\}.
\]
Then the $(E,\Gamma_{\alpha,\beta},\SS_{\alpha,\beta})$-Gale--Stewart game is equivalent to Schmidt's $(\alpha,\beta,S)$-game, in the sense that every winning Alice-strategy (resp. Bob-strategy) for the $(E,\Gamma_{\alpha,\beta},\SS_{\alpha,\beta})$-Gale--Stewart game corresponds to a winning Alice-strategy (resp. Bob-strategy) for Schmidt's $(\alpha,\beta,\SS)$-game, and vice-versa. In particular, each game is determined if and only if the other is.

Similarly, for the $(\alpha,\beta,S)$-strong winning game and the $(\beta,S)$-absolute winning game, there exist rulesets $\Gamma_{\alpha,\beta}^{\text{strong}}$ and $\Gamma_{\beta}^{\text{absolute}}$ and sets $\SS_{\alpha,\beta}^{\text{strong}}$ and $\SS_{\beta}^{\text{absolute}}$ such that the $(E,\Gamma_{\alpha,\beta}^{\text{strong}},\SS_{\alpha,\beta}^{\text{strong}})$-Gale--Stewart game is equivalent to the $(\alpha,\beta,S)$-strong winning game and the $(E,\Gamma_\beta^{\text{absolute}},\SS_{\beta}^{\text{absolute}})$-Gale--Stewart game is equivalent to the $(\beta,S)$-absolute winning game.

\subsection{Games played on complete metric spaces}
We can generalize the ideas of \sectionsymbol\ref{subsectionreinterpretation} to the setting of \emph{games played on complete metric spaces}. Such games will depend on a nonempty set $E$, a ruleset $\Gamma:\coprod_{n\in\N}E^{n - 1}\to \PP^*(E)$, a complete metric space $(X,\dist)$, and on two additional parameters $(I,S)$.

The parameter $I$ is the \emph{topological interpretation} of the game, and it is a map $I:E_\Gamma^*\to\KK^*(X)$, where $\KK^*(X)$ denotes the set of all nonempty closed subsets of $X$, which is order-preserving in the sense that $\omega\preceq \tau$ implies $I(\tau)\subset I(\omega)$. Given a topological interpretation $I:E_\Gamma^*\to\KK^*(X)$, we can define $I:E_\Gamma^\N\to \KK^*(X)$ by the formula $I(\omega) = \bigcap_{n = 0}^\infty I(\omega_1,\ldots,\omega_n)$.

The parameter $S$ is the \emph{target set}, and it is a subset of $X$. The $(E,\Gamma,I,S)$-game is played as follows: Players Alice and Bob take turns playing legal moves, resulting in a play $\omega\in E_\Gamma^\N$ which defines the \emph{outcome} $I(\omega)\in\KK^*(X)$. If $I(\omega)\cap S\neq\emptyset$, then Alice wins; otherwise Bob wins. We say that the $(E,\Gamma,I,S)$-game is a \emph{game played on the metric space $(X,\dist)$}.

Based on the above description, it is easy to see that each $(E,\Gamma,I,S)$-game is equivalent to some Gale--Stewart game, namely the Gale--Stewart game $(E,\Gamma,\SS_{I,S})$ where
\begin{equation}
\label{Sprimedef}
\SS_{I,S} = \{\omega\in E_\Gamma^\N :I(\omega)\cap S\neq\emptyset\}.
\end{equation}
Moreover, the games of \sectionsymbol\ref{subsectiongames} are all examples of games played on complete metric spaces, with $I(\omega) = \bigcap_{n < |\omega|/2} B(\omega_{2n + 1})$.

We mention in passing that the well-known Banach--Mazur game is also an example of a game played on a complete metric space. It satisfies the hypotheses of the two main theorems in the next section, and is therefore determined on Borel sets and undetermined on Bernstein sets. However, both these results are already known as a consequence of the fact that Alice has a winning strategy for the Banach--Mazur game if and only if the target set is comeager; see \cite[p. 152, last paragraph/p. 153, first paragraph]{Kechris} and \cite[p. 29, last paragraph]{Oxtoby}, respectively. In fact, the former reference shows that the Banach--Mazur game is determined on analytic sets. The corresponding question of whether the games of \sectionsymbol\ref{subsectiongames} are determined on analytic sets remains open.

\section{Determinacy and indeterminacy of games played on complete metric spaces}
In this section, we state and prove generalizations of the theorems of \sectionsymbol\ref{subsectionmain}.

\subsection{Determinacy on Borel sets}
We will prove the following generalization of Theorem \ref{theoremdetermined}:

\begin{theorem}
\label{theoremdeterminedgeneral}
Let $(E,\Gamma,I,S)$ be a game played on a complete metric space $(X,\dist)$, and suppose that $S\subset X$ is Borel. Suppose in addition that
\begin{equation}
\label{conditiondense}
\text{For all }\omega\in E_\Gamma^\N, \text{ either } \diam(I(\omega_1,\ldots,\omega_n)) \tendsto n 0 \text{ or } I(\omega)\cap S\neq\emptyset.
\end{equation}
Then the $(E,\Gamma,I,S)$-game is determined.
\end{theorem}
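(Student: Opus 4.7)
The plan is to reduce to Martin's Borel determinacy theorem (Theorem~\ref{theoremboreldeterminacy}) applied to the equivalent Gale--Stewart game $(E,\Gamma,\SS_{I,S})$ from \eqref{Sprimedef}. Under this reduction the task is to verify that $\SS_{I,S}$ is a Borel subset of $E^{\N}$ with the product topology induced by the discrete topology on $E$.

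The key idea is to split plays according to whether the diameters of the topological interpretation shrink. Let
\[
A \df \left\{\omega\in E_\Gamma^{\N} : \diam\bigl(I(\omega_1,\ldots,\omega_n)\bigr) \tendsto n 0\right\}.
\]
For each $k,n\in\N$ the set $\{\omega : \diam(I(\omega_1,\ldots,\omega_n)) < 1/k\}$ depends only on the first $n$ coordinates and is therefore a union of cylinder sets and in particular open. Writing $A = \bigcap_k \bigcup_n \{\omega : \diam(I(\omega_1,\ldots,\omega_n)) < 1/k\}$ exhibits it as a $G_\delta$ set. By hypothesis \eqref{conditiondense} we have $E_\Gamma^{\N}\butnot A \subset \SS_{I,S}$, so half of the plays are already accounted for and contribute a Borel piece to $\SS_{I,S}$.

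On the complementary set $A$, the sequence $(I(\omega_1,\ldots,\omega_n))_n$ is a decreasing sequence of nonempty closed subsets of the complete metric space $X$ whose diameters tend to zero, so by Cantor's intersection theorem $I(\omega)$ is a singleton $\{\pi(\omega)\}$, defining a map $\pi:A\to X$. The plan here is to check continuity of $\pi$ directly: given $\omega\in A$ and $\epsilon>0$, choose $n$ with $\diam(I(\omega_1,\ldots,\omega_n)) < \epsilon$; then any $\tau\in A$ agreeing with $\omega$ on its first $n$ coordinates satisfies $\pi(\omega),\pi(\tau) \in I(\omega_1,\ldots,\omega_n)$, giving $d(\pi(\omega),\pi(\tau)) < \epsilon$. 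Since continuous preimages of Borel sets are Borel, $\pi^{-1}(S)$ is a Borel subset of $A$, and
\[
\SS_{I,S} = \bigl(E_\Gamma^{\N}\butnot A\bigr) \cup \pi^{-1}(S)
\]
is therefore Borel. Applying Theorem~\ref{theoremboreldeterminacy} to $(E,\Gamma,\SS_{I,S})$ and translating back via the equivalence of \sectionsymbol\ref{subsectionreinterpretation} yields the determinacy statement for the $(E,\Gamma,I,S)$-game.

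The only nontrivial step is the Borel calculation, and that is essentially bookkeeping: the hypothesis \eqref{conditiondense} is designed precisely to award Alice the ``pathological'' plays where no outcome point exists, so that on the complementary set the outcome is a genuine continuous function of the play. I do not expect a serious obstacle beyond this dichotomy; the framework of \sectionsymbol\ref{sectiongeneral} was already tailored to match the hypotheses of Martin's theorem.
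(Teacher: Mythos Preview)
Your proposal is correct and follows essentially the same route as the paper: define the set of plays with shrinking diameters (your $A$, the paper's $Z$), note it is Borel, use hypothesis \eqref{conditiondense} to put its complement inside $\SS_{I,S}$, define the singleton-outcome map on $A$ (your $\pi$, the paper's $\iota$), prove its continuity by the same cylinder-set argument, and conclude that $\SS_{I,S}$ is Borel so that Martin's theorem applies. The only cosmetic difference is that you spell out the $G_\delta$ description of $A$ explicitly, whereas the paper simply asserts that $Z$ is Borel.
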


\begin{remark}
For the games of \sectionsymbol\ref{subsectiongames}, the condition \eqref{conditiondense} holds as long as $S$ is dense.
\end{remark}
\begin{proof}
Fix $\omega\in E_\Gamma^\N$, and for each $n$ let $\omega_n = (x_n,\rho_n)$. By assumption we have $(x_{2m + 1},\rho_{2m + 1}) \leq_s (x_{2n + 1},\rho_{2n + 1})$ for all $m\geq n$, so
\begin{equation}
\label{formalinclusion}
\dist(x_{2m + 1},x_{2n + 1}) \leq \rho_{2n + 1} - \rho_{2m + 1}.
\end{equation}
Since the sequence $(\rho_{2n + 1})_0^\infty$ is decreasing and bounded, it is Cauchy and therefore the right hand side of \eqref{formalinclusion} tends to zero as $m,n\to\infty$. Thus the sequence $(x_{2n + 1})_0^\infty$ is Cauchy. Since $X$ is complete, we have $x_{2n + 1}\to x$ for some $x\in X$. Then $I(\omega)\supset B(x,\rho)$ where $\rho = \lim_{n\to\infty}\rho_n$. Since $S$ is dense, we have $I(\omega)\cap S\neq\emptyset$ if $\rho > 0$.
\end{proof}

On the other hand, if $S$ is not dense, then Bob can win on the first turn. Thus Theorem \ref{theoremdeterminedgeneral} implies Theorem \ref{theoremdetermined}.

\begin{proof}[Proof of Theorem \ref{theoremdeterminedgeneral}]
By Theorem \ref{theoremboreldeterminacy}, it suffices to show that the set $\SS_{I,S}$ defined in \eqref{Sprimedef} is Borel. Let
\begin{equation}
\label{Zdef}
Z = \{\omega\in E_\Gamma^\N:\diam(I(\omega_1,\ldots,\omega_n)) \tendsto n 0\}.
\end{equation}
Clearly, $Z$ is Borel. On the other hand, $E_\Gamma^\N\butnot Z\subset \SS_{I,S}$ by \eqref{conditiondense}. So it suffices to show that $\SS_{I,S}\cap Z$ is Borel. Since $X$ is complete, we have $\#(I(\omega)) = 1$ for all $\omega\in Z$. Thus we may define a map $\iota:Z\to X$ by letting $\{\iota(\tau)\} = I(\tau)$. We observe that $\SS_{I,S}\cap Z = \iota^{-1}(S)$. Thus to complete the proof it suffices to show the following:
\begin{lemma}
\label{lemmacontinuousZ}
The map $\iota$ is continuous.
\end{lemma}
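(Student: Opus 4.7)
The plan is to unfold the definitions directly. Recall that $E^{\N}$ carries the product topology with $E$ discrete, so a neighborhood base of a play $\omega$ consists of the cylinder sets $[\omega_1,\ldots,\omega_n] \df \{\tau\in E^{\N} : \tau_k = \omega_k \text{ for } 1\le k\le n\}$. Consequently, showing continuity of $\iota$ at a point $\omega\in Z$ amounts to: for each $\epsilon > 0$, finding an integer $n$ such that
\[
\iota\bigl([\omega_1,\ldots,\omega_n]\cap Z\bigr) \subset B(\iota(\omega),\epsilon).
\]

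First I would use the defining property of $Z$: since $\omega \in Z$, the diameters $\diam(I(\omega_1,\ldots,\omega_n))$ tend to $0$, so one can choose $n$ so that $\diam(I(\omega_1,\ldots,\omega_n)) < \epsilon$. Next, for any $\tau \in [\omega_1,\ldots,\omega_n]\cap Z$ we have $(\tau_1,\ldots,\tau_n) = (\omega_1,\ldots,\omega_n)$, and the order-preserving property of $I$ (extended to infinite plays via $I(\tau) = \bigcap_m I(\tau_1,\ldots,\tau_m)$) yields
\[
\iota(\tau) \in I(\tau) \subset I(\tau_1,\ldots,\tau_n) = I(\omega_1,\ldots,\omega_n) \ni \iota(\omega).
\]
Since both points lie in a common set of diameter less than $\epsilon$, we obtain $\dist(\iota(\tau),\iota(\omega)) < \epsilon$, which is exactly what is needed.

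There is really no serious obstacle here; the statement is essentially a tautology once the right notion of basic open set in $E^\N$ is invoked. The only thing to be careful about is applying the order-preserving condition correctly to the \emph{infinite} play $\tau$: one must first appeal to the definition $I(\tau) = \bigcap_m I(\tau_1,\ldots,\tau_m)$ to deduce $I(\tau)\subset I(\tau_1,\ldots,\tau_n)$, and then use $\tau \in [\omega_1,\ldots,\omega_n]$ to identify the finite-prefix image with $I(\omega_1,\ldots,\omega_n)$. After that, bounding the distance between the two selected points by the diameter of the enclosing set closes the argument.
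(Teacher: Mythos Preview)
Your argument is correct and essentially identical to the paper's: both use that $\omega\in Z$ gives an $n$ with $\diam(I(\omega_1,\ldots,\omega_n))$ small, and then observe that the cylinder $[\omega_1,\ldots,\omega_n]$ is an open neighborhood of $\omega$ whose image under $\iota$ lies in that set. The only cosmetic difference is that the paper phrases it as ``preimage of an open $U$ is open'' (using $\dist(x,X\setminus U)$ in place of your $\epsilon$), whereas you verify continuity at each point via $\epsilon$-balls.
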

\begin{subproof}
Suppose that $U$ is an open subset of $X$, and fix $x\in U$ and $\omega\in \iota^{-1}(x)$. Since $\omega\in Z$, we have $\diam(I(\omega_0,\ldots,\omega_n)) < \dist(x,X\butnot U)$ for some $n\in\N$. Then the set $V = \{\tau \in E_\Gamma^\N: \tau_i = \omega_i, 0 \leq i \leq n\}$ satisfies $\omega\in V\cap Z \subset \iota^{-1}(U)$; moreover, $V$ is open in the topology of $E_\Gamma^\N$. Since $x$ and $\omega$ were arbitrary, $\iota$ is continuous.
\end{subproof}
\end{proof}

\ignore{
\begin{notation}
If $X$ is a topological space, let $\KK^*(X)$ denote the set of all closed subsets of $X$.
\end{notation}

\begin{definition}
Let $E$ be a nonempty set. A \emph{topological embedding} of the tree $E^*$ into a complete metric space $(X,\dist)$ is a map $I:E^*\to \KK^*(X)$ such that $\omega\preceq \tau$ implies $I(\tau)\subset I(\omega)$. Given a topological embedding $I:E^*\to\KK^*(X)$, we can define $I:E^\N\to \KK^*(X)$ by the formula $I(\omega) = \bigcap_{n = 0}^\infty I(\omega_1,\ldots,\omega_n)$.

If $I:E^*\to\KK^*(X)$ is a topological embedding and if $S\subset \KK^*(X)$, we can consider the Gale--Stewart game $(E,I^{-1}(S))$. Moreover, if $S\subset X$, we can let
\[
\SS_{I,S} \df \{A\in\PP(X): A\cap S\neq\emptyset\},
\]
and consider the Gale--Stewart game $(E,I^{-1}(\SS_{I,S}))$.
\end{definition}

\begin{example}[Banach--Mazur Game]
The most well-known instance of a Gale--Stewart game is the Banach--Mazur game. Given a complete metric space $(X,\dist)$, the Banach--Mazur game on $X$ can be described as follows. In the first move of the game, Bob chooses an ordered pair $(x,\rho _0)\in (X\times \R _{>0})$, thus specifying a closed ball of positive radius centered on $x$. In what follows, Alice and Bob alternatively choose ordered pairs in the same manner specifying balls of positive radius under the constraint that each one is contained in the previous ball of choice of the other player. In term of the Gale--Stewart model the Banach--Mazur game can be described as follows:
\begin{enumerate}
\item $A_n = X \times \R_{>0} = \{ (x, \rho): x \in X, \rho > 0 \}$ for
$n > 0$.
\item $\Gamma(\omega_0) = A_1$.
\item $\Gamma(\omega_0, (x_1, \rho_1), \ldots, (x_n, \rho_n)) = \{ (x, \rho) \in
A_{n + 1}: d(x_n, x) + \rho \leq \rho_n \}$ for $n > 0$.
\end{enumerate}
We then let $I:E^*\to \KK^*(X)$ be the map $I(\omega) = B(\omega_{|\omega|})$, where $B: \coprod_{n > 0} A_n \longrightarrow \KK^*(X)$ is the function $(x, \rho) \mapsto B(x, \rho) \df \{ y \in X: d(x, y) \leq \rho \}$.

For the Banach--Mazur game, Alice has a winning strategy if and only if $S$ is comeager, and Bob has a winning strategy if and only if $S\cap B$ is meager for some ball $B\subset X$. See~\cite{Oxtoby} for more details. \\
\end{example}

\begin{example}[Schmidt's game]
Another instance of a Gale--Stewart game is Schmidt's $(\alpha, \beta)$-game which was described in the introduction. In terms of the Gale--Stewart setup, the scheme for Schmidt's game is given by:
\begin{enumerate}
\item $A_n = X \times \R_{>0} = \{ (x, \rho): x \in X, \rho > 0 \}$ for all $n > 0$.
\item $\Gamma(\omega_0) = A_1$.
\item $\Gamma(\omega_0, (x_1, \rho_1), \ldots, (x_{2n + 1}, \rho_{2n + 1})) = \{(x, \rho) \in A_{2n + 2}: d(x_{2n + 1}, x) + \alpha\rho \leq \rho_{2n + 1} \}$ for $n \geq 0$.
\item $\Gamma(\omega_0, (x_1, \rho_1), \ldots, (x_{2n}, \rho_{2n})) = \{ (x,
\rho) \in A_{2n + 1}: d(x_{2n}, x) + \beta\rho \leq \rho_{2n} \}$ for $n > 0$.
\end{enumerate}
As mentioned before, since $\rho_{2n + 1} = (\alpha\beta)^n\rho_1$ and $\rho_{2n} = \beta(\alpha\beta)^{n - 1}\rho_1$, we have $\lim \rho_n = 0$, and so each play in $E^\N$ corresponds to a single point  $x_{\infty}\in X$.
\end{example}

\subsection{Determinacy for topological embeddings}

\begin{proposition}
\label{propositiondeterminacy}
Let $I:E^*\to\KK^*(X)$ be a topological embedding of a Gale--Stewart game $(E,\Gamma)$, and suppose that every nonempty open subset of $X$ is a superset of some element of $\Gamma(\omega_0)$. Then every Borel subset of $X$ is determined.
\end{proposition}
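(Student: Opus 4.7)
The plan is to establish determinacy by reducing to Martin's Borel determinacy theorem (Theorem~\ref{theoremboreldeterminacy}) applied to the equivalent Gale--Stewart reformulation $(E,\Gamma,\SS_{I,S})$, where $\SS_{I,S}=\{\omega\in E_\Gamma^\N:I(\omega)\cap S\neq\emptyset\}$. The key task is to certify that $\SS_{I,S}$ is a Borel subset of $E_\Gamma^\N$; the new hypothesis on $\Gamma(\omega_0)$ serves to dispatch separately the case in which $S$ fails to be dense, for which Borelness alone would not suffice.

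First, assume $S$ is not dense, so that $X\butnot\overline S$ contains a nonempty open set $U$. By hypothesis there is a legal first move $\omega_1\in\Gamma(\omega_0)$ with $I(\omega_1)\subset U$. Opening with this $\omega_1$ is a winning strategy for Bob: order-preservation of $I$ forces $I(\omega)\subset I(\omega_1)\subset X\butnot S$ for every infinite continuation, so $\omega\notin\SS_{I,S}$ regardless of Alice's play.

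Next, assume $S$ is dense. Mimicking the proof of Theorem~\ref{theoremdeterminedgeneral}, split $E_\Gamma^\N$ into the Borel set $Z=\{\omega:\diam(I(\omega_1,\ldots,\omega_n))\to 0\}$ and its complement. On $Z$, completeness of $X$ makes $I(\omega)$ a singleton $\{\iota(\omega)\}$, and $\iota\colon Z\to X$ is continuous by Lemma~\ref{lemmacontinuousZ}; hence $\iota^{-1}(S)=\SS_{I,S}\cap Z$ is Borel. For $\omega\in E_\Gamma^\N\butnot Z$, in the Schmidt-style setting where moves are formal balls $(x_n,\rho_n)$ ordered by $\leq_s$, the center sequence $(x_{2n+1})$ is Cauchy and converges to some $x\in X$, giving a limiting ball $B(x,\rho)\subset I(\omega)$ with $\rho=\lim\rho_n>0$; density of $S$ then yields $\omega\in\SS_{I,S}$. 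Together these show $\SS_{I,S}$ is Borel, and Martin's theorem concludes.

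The main obstacle is this last step: concluding $I(\omega)\cap S\neq\emptyset$ for every $\omega\in E_\Gamma^\N\butnot Z$ requires more than the bare definition of a topological embedding, since it appeals to the extraction of a limiting ball of positive radius inside $I(\omega)$ from the $\leq_s$ structure of Schmidt-style moves. A general topological embedding need not have this property, which is presumably why the authors ultimately replace Proposition~\ref{propositiondeterminacy} with Theorem~\ref{theoremdeterminedgeneral}, promoting exactly this consequence into the explicit hypothesis~\eqref{conditiondense}.
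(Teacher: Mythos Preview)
Your proposal is correct and follows essentially the same route as the paper's proof: dispatch the non-dense case by having Bob open inside $X\setminus\overline S$, and in the dense case show $\SS_{I,S}=\iota^{-1}(S)\cup(E_\Gamma^\N\setminus Z)$ is Borel via Lemma~\ref{lemmacontinuousZ} and then invoke Martin's theorem. Your closing observation is also on target: the paper's own argument here leans on an undefined ``properness'' (exactly the Schmidt-style ball structure you invoke) to handle $E_\Gamma^\N\setminus Z$, which is precisely why this proposition is supplanted by Theorem~\ref{theoremdeterminedgeneral} with hypothesis~\eqref{conditiondense} made explicit.
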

The condition on $\Gamma(\omega_0)$ is satisfied for the Banach--Mazur game and for all variants of Schmidt's game.
\begin{proof}
We first prove the following lemma:
\begin{lemma}
\label{lemmacontinuousZ}
The map $\iota:Z\to X$ defined by $\{\iota(\tau)\} = I(\tau)$ is continuous.
\end{lemma}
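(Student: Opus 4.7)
The plan is to verify continuity of $\iota$ by checking that the preimage of every open set is open in the subspace topology on $Z \subset E_\Gamma^\N$, where $E_\Gamma^\N$ carries the product topology inherited from viewing $E$ as discrete. A neighborhood basis for this topology is given by the cylinders fixing finitely many initial coordinates, so I will try to exhibit such a cylinder around each point of $\iota^{-1}(U)$.

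First I would fix an open set $U\subset X$ and a play $\omega\in\iota^{-1}(U)$, and set $x = \iota(\omega)$, so $\{x\} = I(\omega) \subset U$. The key step is to exploit the defining property of $Z$: since $\diam I(\omega_1,\ldots,\omega_n)\to 0$ as $n\to\infty$, and since the nested closed sets $I(\omega_1,\ldots,\omega_n)$ each contain $x$ (by order-preservation of $I$ together with $\{x\} = I(\omega) \subset I(\omega_1,\ldots,\omega_n)$), I can choose $n$ large enough that $\diam I(\omega_1,\ldots,\omega_n) < \dist(x, X\butnot U)$. Since $U$ is open and $x\in U$, the latter distance is strictly positive, so this forces $I(\omega_1,\ldots,\omega_n) \subset U$.

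With such an $n$ in hand, the remaining step is routine. Let $V = \{\tau \in E_\Gamma^\N : \tau_i = \omega_i \text{ for } 1\leq i\leq n\}$, a basic open neighborhood of $\omega$ in $E_\Gamma^\N$. For any $\tau\in V$, order-preservation of $I$ yields $I(\tau) \subset I(\tau_1,\ldots,\tau_n) = I(\omega_1,\ldots,\omega_n) \subset U$, so $\iota(\tau)\in U$ whenever $\tau \in V\cap Z$. Hence $V\cap Z$ is an open neighborhood of $\omega$ in $Z$ contained in $\iota^{-1}(U)$, and since $U$ and $\omega$ were arbitrary, $\iota$ is continuous.

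There is no serious obstacle here: the definition of $Z$ is tailored precisely so that the finite truncations $I(\omega_1,\ldots,\omega_n)$ shrink down to the singleton $\{\iota(\omega)\}$, and the product topology on $E_\Gamma^\N$ is designed to detect agreement on such finite truncations. The only subtlety worth flagging is the equality $I(\tau_1,\ldots,\tau_n) = I(\omega_1,\ldots,\omega_n)$ for $\tau\in V$, which is automatic because $I$ is a function of the initial word and $\tau$ agrees with $\omega$ on its first $n$ coordinates.
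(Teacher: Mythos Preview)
Your proof is correct and follows essentially the same approach as the paper's own proof: choose $n$ so that $\diam I(\omega_1,\ldots,\omega_n) < \dist(x, X\setminus U)$, and take the cylinder $V$ determined by the first $n$ coordinates. Your write-up is in fact slightly more explicit than the paper's in justifying why $I(\omega_1,\ldots,\omega_n)\subset U$ and why $I(\tau_1,\ldots,\tau_n)=I(\omega_1,\ldots,\omega_n)$ for $\tau\in V$.
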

\begin{proof}
Suppose that $U$ is an open subset of $X$, and fix $x\in U$ and $a\in \iota^{-1}(x)$. Since $a\in Z$, we have $\diam(I(\omega_0,\ldots,\omega_n)) < \dist(x,X\butnot U)$ for some $n\geq 0$. Then the set $V = \{b \in E^\N: b_i = \omega_i, 0 \leq i \leq n\}$ satisfies $a\in V\cap Z \subset \iota^{-1}(U)$. Since $V$ is open in the topology of $E^\N$, so is $\iota^{-1}(U)$, and thus $\iota$ is continuous.
\end{proof}
Now assume $S\subset X$ is Borel. Then $\iota^{-1}(S)$ is a Borel subset of $Z$, which is in turn a Borel subset of $E^\N$. Now if $S$ is not dense, then Bob can win on the first turn by the assumption on $\Gamma(\omega_0)$, so let us assume that $S$ is dense. Together with properness, this implies that Alice wins if the radii do not tend to zero. Thus
\[
\SS_{I,S} = \iota^{-1}(S)\cup (E^\N\butnot Z)
\]
is a Borel set. By Martin's theorem, the game is determined on this set, and the conclusion follows.
\end{proof}
}

\subsection{Indeterminacy on Bernstein sets}
\label{subsectionindeterminacy}
To state the appropriate generalization of Theorem \ref{maintheorem} to the setting of games played on complete metric spaces, we need to introduce some more terminology.

\begin{definition}
Let $(E,\Gamma,\SS)$ be a Gale--Stewart game. Given $\tau\in E_\Gamma^*$, an Alice-strategy $\sigma_A$, and a Bob-strategy $\sigma_B$, there is a unique infinite play $\omega\succeq\tau$ satisfying \eqref{omeganformula} for all $n > |\tau|$. This play represents the game which results if the players Alice and Bob first make the moves $\tau_1,\ldots,\tau_{|\tau|}$, and then play according to the strategies $\sigma_A$ and $\sigma_B$ respectively. It will be denoted $(\tau,\sigma_A,\sigma_B)$. If $(\tau,\sigma_A,\sigma_B)\in \SS$ for every Bob-strategy $\sigma_B$, then the Alice-strategy $\sigma_A$ is said to be \emph{winning relative to $\tau$} for the $(E,\Gamma,\SS)$-Gale--Stewart game. We say that the strategy $\sigma_A$ \emph{ensures that $\phi\in \SS$ assuming $\phi\succeq\tau$}. Conversely, if $(\sigma_A,\sigma_B)\notin \SS$ for every Alice-strategy $\sigma_A$, then the Bob-strategy $\sigma_B$ is said to be winning relative to $\tau$ for the $(E,\Gamma,\SS)$-Gale--Stewart game, and we say that $\sigma_B$ ensures that $\phi\notin \SS$ assuming $\phi\succeq\tau$.
\end{definition}

\begin{definition}
Let $(E,\Gamma,I,S)$ be a game played on a complete metric space $(X,\dist)$. Let $Z$ be defined as in \eqref{Zdef}, and for each $x\in X$ let
\[
N_x = \{\omega\in E_\Gamma^\N: x\notin I(\omega)\}.
\]
\begin{itemize}
\item The $(E,\Gamma,I,S)$-game is \emph{shrinking} if for every $\omega\in E_\Gamma^*$, both players have strategies ensuring $\phi\in Z$ assuming $\phi\succeq\omega$.
\item The $(E,\Gamma,I,S)$-game is \emph{nondegenerate} if for every $\omega\in E_\Gamma^*$ and $x\in X$, both players have strategies ensuring $\phi\in N_x$ assuming $\phi\succeq\omega$.
\end{itemize}
In other words, the game is shrinking if each player can ensure that the outcome is in $Z$, even if for finitely many moves that player does not have free will. Similarly, the game is nondegenerate if each player can ensure that the outcome is in $N_x$, under the same stipulations. More informally, the game is shrinking if each player can force the diameters to tend to zero, and is nondegenerate if each player can avoid an arbitrary specified point. (However, cf. Remark \ref{remarkshrinking}.)
\end{definition}

\begin{theorem}
\label{theoremindeterminacy}
Let $(E,\Gamma,I,S)$ be a shrinking nondegenerate game played on a complete metric space $(X,\dist)$. If Alice (resp. Bob) has a winning strategy for the $(E,\Gamma,I,S)$-game, then $S$ (resp. $X\butnot S$) contains a closed perfect set. In particular, if $S$ is a Bernstein set then the game is undetermined.
\end{theorem}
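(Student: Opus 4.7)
My plan is to show directly that if either player has a winning strategy then the corresponding set contains a closed perfect set, which when combined with the definition of a Bernstein set yields indeterminacy. Treating Alice's case (Bob is symmetric), suppose $\sigma_A$ is a winning Alice-strategy. I will construct, for every $s \in \{0,1\}^{<\N}$, a $\sigma_A$-compatible finite legal play $\tau_s \in E_\Gamma^*$ of even length (so Bob is to move next), with $\tau_\emptyset$ the empty play, satisfying $\tau_s \preceq \tau_{s'}$ whenever $s \preceq s'$, $I(\tau_{s0}) \cap I(\tau_{s1}) = \emptyset$ for every $s$, and $\diam(I(\tau_s)) \le 2^{-|s|}$. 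Each $\xi \in \{0,1\}^\N$ then determines a unique infinite $\sigma_A$-compatible legal play $\omega^\xi$ extending every $\tau_{\xi|_n}$; by the diameter bound and completeness of $X$, $I(\omega^\xi)$ is a single point $x^\xi$, which lies in $S$ because $\sigma_A$ is winning. The map $\xi \mapsto x^\xi$ is continuous by the diameter bound and injective by the disjointness at the first index where two branches diverge, so it is a homeomorphism from the compact perfect space $\{0,1\}^\N$ onto a closed perfect subset of $S$.

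For the inductive step, given $\tau_s$, I would apply the two hypotheses in sequence. First, the shrinking hypothesis at $\tau_s$ supplies a Bob-strategy $\sigma_B^{\mathrm{sh}}$ such that $\omega^{s,0} := (\tau_s,\sigma_A,\sigma_B^{\mathrm{sh}})$ lies in $Z$, yielding a point $\{x^{s,0}\} = I(\omega^{s,0})$. Next, the nondegeneracy hypothesis applied at $\tau_s$ and the point $x^{s,0}$ supplies a Bob-strategy $\sigma_B^{\mathrm{av}}$ such that $\omega^{s,1} := (\tau_s,\sigma_A,\sigma_B^{\mathrm{av}})$ satisfies $x^{s,0} \notin \bigcap_n I(\omega^{s,1}_1,\dots,\omega^{s,1}_n)$; since each term is closed, some even-length truncation $\tau_{s1}^{(0)}$ has its $I$-set at positive distance $2\delta$ from $x^{s,0}$. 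Now choose an even $N_0$ so large that $\diam(I(\omega^{s,0}_1,\dots,\omega^{s,0}_{N_0})) < \min(\delta, 2^{-|s|-1})$, and let $\tau_{s0}$ be that truncation; then $I(\tau_{s0}) \subset B(x^{s,0},\delta)$ is disjoint from $I(\tau_{s1}^{(0)})$. Finally, to secure the diameter bound on the ``1''-side (needed for branches that are eventually constant $1$), apply the shrinking hypothesis once more at $\tau_{s1}^{(0)}$ and extend it to $\tau_{s1}$ with $\diam(I(\tau_{s1})) < 2^{-|s|-1}$; since $I(\tau_{s1}) \subset I(\tau_{s1}^{(0)})$, disjointness from $I(\tau_{s0})$ is preserved.

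The Bob case follows the same template with the roles reversed: starting from a winning Bob-strategy $\sigma_B$, build $\sigma_B$-compatible $\tau_s$ of odd length (Alice to move), fork on Alice's moves using Alice's shrinking and nondegeneracy strategies, and extract a closed perfect subset of $X \butnot S$. When $S$ is Bernstein, neither $S$ nor $X \butnot S$ can contain a closed perfect subset of $X$, so neither player has a winning strategy and the game is undetermined. The main technical obstacle is the inductive step: nondegeneracy alone only yields a finite play whose $I$-set avoids the distinguished point $x^{s,0}$ but may have arbitrary diameter, while shrinking alone cannot separate the two branches; the resolution is to apply nondegeneracy first to secure separation, then to invoke shrinking twice (on both $\omega^{s,0}$ and $\tau_{s1}^{(0)}$) to force diameters down, relying on monotonicity of $I$ to carry the separation through all further extensions.
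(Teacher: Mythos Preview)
Your proposal is correct and follows essentially the same route as the paper: build a Cantor scheme of $\sigma_A$-compatible finite plays by (i) using the shrinking hypothesis to produce one branch with a definite limit point, (ii) using nondegeneracy to fork off a second branch that misses that point, (iii) truncating and applying shrinking again to force small diameters, and then embedding $\{0,1\}^\N$ homeomorphically into $S$. The paper packages step (i)--(iii) as a standalone splitting lemma and, to avoid writing out Bob's case, first passes to the symmetric weaker statement ``a strategy ensuring $\phi\in Z\Rightarrow\iota(\phi)\in S$''; but these are organizational choices rather than substantive differences. One trivial quibble: your base-case bound $\diam(I(\tau_\emptyset))\le 1$ need not hold for the empty play, but this is harmless since the continuity and injectivity arguments only use the diameter bound for $|s|\ge 1$.
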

\begin{proof}
In fact we will demonstrate the following slightly weaker assertion: If Alice (resp. Bob) has a strategy ensuring that $\phi\in Z$ implies $\iota(\phi)\in S$ (resp. that $\phi\in Z$ implies $\iota(\phi)\notin S$), then $S$ (resp. $X\butnot S$) contains a closed perfect set. The advantage of proving the weaker assertion is that it is symmetric with respect to interchanging Alice and Bob, so without loss of generality we suppose that Alice has a strategy $\sigma_A$ which ensures that $\phi\in Z$ implies $\iota(\phi)\in S$. Recall that $E_{\sigma_A}^*$ denotes the set of finite $\sigma_A$-compatible plays (cf. Definition \ref{definitionsigmacompatible}).

\begin{lemma}
\label{lemmasplit}
For every $\omega\in E_{\sigma_A}^*$ and $r > 0$, there exist $\tau_1,\tau_2\succeq \omega$ in $E_{\sigma_A}^*$ such that $\diam(I(\tau_i)) < r$ and $I(\tau_1)\cap I(\tau_2) = \emptyset$.
\end{lemma}
\begin{subproof}
Since $(E,\Gamma,I,S)$ is shrinking, Bob has a strategy $\sigma_B$ ensuring that $\phi\in Z$ assuming $\phi\succeq\omega$; it follows that the play $\tau_1^\infty \df (\omega,\sigma_A,\sigma_B)$ is $\sigma_A$-compatible and satisfies $\omega\preceq \tau_1^\infty\in Z$. Since $\tau_1^\infty\in Z$ and $X$ is complete, $I(\tau_1^\infty)$ is a singleton, say $I(\tau_1^\infty) = \{x\}$. Since $(E,\Gamma,I,S)$ is nondegenerate, Bob has a strategy $\w\sigma_B$ ensuring that $\phi\in N_x$ assuming $\phi\succeq\omega$; it follows that the play $\tau_3^\infty \df (\omega,\sigma_A,\w\sigma_B)$ is $\sigma_A$-compatible and satisfies $\omega\preceq \tau_3^\infty\in N_x$. Since $x\notin I(\tau_3^\infty) = \bigcap_{\tau_3\preceq \tau_3^\infty} I(\tau_3)$, there exists a finite play $\omega\preceq \tau_3\preceq \tau_3^\infty$ satisfying $x\notin I(\tau_3)$. Again using the fact that $(E,\Gamma,I,S)$ is shrinking, there is a $\sigma_A$-compatible play $\tau_2^\infty\in Z$ satisfying $\tau_3\preceq \tau_2^\infty$. Again $I(\tau_2^\infty)$ is a singleton, say $I(\tau_2^\infty) = \{y\}$. We have $x\neq y$. Let $\rho = \min(\frac{1}{2}\dist(x,y),r) > 0$, and let $\tau_1\preceq \tau_1^\infty$ and $\tau_2\preceq \tau_2^\infty$ satisfy $\diam(I(\tau_i)) < \rho$. Since $I(\tau_i^\infty)\subset I(\tau_i)$, this completes the proof.
\end{subproof}

Let $B^\N$ be the set of all infinite binary sequences, and $B^*$ be the set of all finite binary sequences. We will denote the concatenation operator on $B^*$ by the symbol $*$. We inductively define a map $g:B^*\to E_{\sigma_A}^*$ sending the empty string of $B^*$ to the empty string of $E^*$ via the following rule:
\begin{quote}
If $\omega = g(\theta)$ for some $\theta\in B^*$, choose $\sigma_A$-compatible plays $\tau_1,\tau_2\succeq \omega$ satisfying $\diam(I(\tau_i)) < 2^{-|\theta|}$ and $I(\tau_1)\cap I(\tau_2) = \emptyset$, as guaranteed by Lemma \ref{lemmasplit}. Let $g(\theta * i) \df \tau_i$ for $i = 1,2$.\\
\end{quote}
Let $\overline g: B^\N \rightarrow E_{\sigma_A}^\N$ denote the natural extension of $g$, and observe that that $\overline g$ is continuous. Moreover, $\overline g: B^\N\to Z$. By Lemma \ref{lemmacontinuousZ}, $\iota\circ\overline g:B^\N\to X$ is continuous, and by construction, $\iota\circ\overline g$ is injective; thus $\iota\circ\overline g$ is a homeomorphic embedding. On the other hand, since $\sigma_A$ ensures that $\phi\in Z$ implies $\iota(\phi)\in S$, we have $\iota\circ\overline g(B^\N)\subset S$. Thus $S$ contains the closed perfect set $\iota\circ\overline g(B^\N)$.
\end{proof}

Having completed the proof of Theorem \ref{theoremindeterminacy}, we proceed to use it to deduce Theorem \ref{maintheorem}. This consists of verifying that each of the games in \sectionsymbol\ref{subsectiongames} is shrinking and nondegenerate under the hypotheses of Theorem \ref{maintheorem}.

\begin{proof}[Proof of Theorem \ref{maintheorem} using Theorem \ref{theoremindeterminacy}]~

\begin{itemize}
\item[(i)] Schmidt's $(\alpha,\beta,S)$-game and the $(\alpha,\beta,S)$-strong winning game are both shrinking for every complete metric space $(X,\dist)$. Indeed, either player may choose the smallest possible radius on each turn, guaranteeing that the diameters of the balls tend to zero and thus that $\phi\in Z$.

Now suppose that $(X,\dist)$ is $c$-uniformly perfect, and that $\alpha,\beta < c/(1 + 2c)$. We claim that both Schmidt's $(\alpha,\beta,S)$-game and the $(\alpha,\beta,S)$-strong winning game are nondegenerate. Indeed, suppose that one of the players has just made a move $B(x,\rho)$, and suppose the other player, without loss of generality Alice, wants to avoid the point $y\in X$. If $y\notin B(x,\alpha \rho)$, then Alice may choose the ball $B(x,\alpha \rho)$ and thus avoid the point $y$. So suppose $y\in B(x,\alpha \rho)$. Since $X$ is $c$-uniformly perfect, we have $B(y,(1 - 2\alpha)\rho)\butnot B(y,c(1 - 2\alpha)\rho)\neq\emptyset$; fix $z\in B(y,(1 - 2\alpha)\rho)\butnot B(y,c(1 - 2\alpha)\rho)$. Then $B(z,\alpha \rho)\subset B(x,\rho)$ since $\dist(z,x) \leq (1 - \alpha)\rho$. So $B(z,\alpha \rho)$ is a legal move for Alice, and it avoids the point $y$ because $\dist(z,y)\geq c(1 - 2\alpha)\rho > \alpha \rho$ by the condition on $\alpha$.

\item[(ii)] If $(X,\dist)$ is a Banach space, then Schmidt's $(\alpha,\beta,S)$-game is nondegenerate as long as $2\max(\alpha,\beta) < 1 + \alpha\beta$; this follows directly from \cite[Lemma 7]{Schmidt1}.

\item[(iii)] For any $0 < \beta < c/5$, the $\beta$-absolute winning game is shrinking on any complete $c$-uniformly perfect metric space. Indeed, Bob can force the diameters to go to zero by always choosing the smallest possible radius for his ball. Next we will describe Alice's strategy to foce the diameters to go to zero. Suppose Bob has just made his $n$th move $B(x_n,\rho_n)$. Alice's strategy will be to remove the center $\{x_n\}$ (or a small neighborhood thereof). This forces Bob's $(n + 1)$ ball $\omega_{n + 1} = (x_{n + 1},\rho_{n + 1}) \leq_s \omega_n =  (x_n,\rho_n)$ to satisfy $x_n\notin B(x_{n + 1},\rho_{n + 1})$, i.e. $\rho_{n + 1} < \dist(x_n,x_{n + 1})$. On the other hand, by the definition of formal inclusion we have $\dist(x_n,x_{n + 1}) \leq \rho_n - \rho_{n + 1}$. Combining these inequalities gives $\rho_{n + 1} < \rho_n/2$, which implies that $\rho_n\to 0$.

Now suppose that $\beta < (c/5)^2$. We claim that the $(\beta,S)$-absolute winning game is nondegenerate.  Indeed, Alice can avoid a point $x\in X$ by simply deleting a neighborhood of it. Now suppose that Alice has just deleted the ball $B(x',\rho')$, following Bob's move $B(x,\rho)$. To see that Bob can avoid the point $y$, note that by \cite[Lemma 4.3]{FSU4}, there is a ball $B(z,(c/5)\rho)\subset B(x,\rho)\butnot B(x',\rho')$; applying \cite[Lemma 4.3]{FSU4} again gives a ball $B(w,(c/5)^2\rho)\subset B(z,(c/5)\rho)\butnot\{y\}$. By making this move, Bob avoids the point $y$.

\item[(iv)] If $(X,\dist)$ is a Banach space, then for $0 < \beta < 1/3$, the argument of (iii) shows that the $\beta$-absolute winning game is shrinking. We must show that it is also nondegenerate. Indeed, Alice can avoid a point by simply deleting a neighborhood of it. We will now describe Bob's strategy to avoid a point $\xx_0\in X$. The strategy will conist of two phases. We will describe the second phase first. Fix a unit vector $\vv\in X$. Suppose that Bob has just made the move $B(\xx_n,\rho_n)$, and that Alice has deleted the ball $B(\xx_n',\rho_n')$ with $\rho_n' \leq \beta \rho_n$. Consider the balls
\begin{equation}
\label{Bobsoptions}
B(\xx_n + (1 - \beta)\rho_n\vv,\beta \rho_n) \text{ and } B(\xx_n - (1 - \beta)\rho_n\vv,\beta \rho_n).
\end{equation}
It is readily computed that the distance between these balls is precisely $2(1 - 2\beta)\rho$. In particular, since $\beta < 1/3$, this distance is strictly greater than $2\beta\rho$, which is in turn greater than the diameter of Alice's ball $B(\xx_n',\rho_n')$. Thus Alice's ball $B(\xx_n',\rho_n')$ intersects at most one of the balls \eqref{Bobsoptions}. Bob's strategy will be to choose whichever ball Alice's ball does not intersect, i.e. whichever one is legal.

The result of Bob's strategy will be that the outcome of the game takes the form
\begin{equation}
\label{outcomeform}
\xx_n + \sum_{m = 0}^\infty \epsilon_m (1 - \beta)\beta^m \rho_n\vv,
\end{equation}
where $\epsilon_m = \pm 1$ for all $m\in\N$. So during the first phase of his strategy, Bob's goal will be to choose a ball $B(\xx_n,\rho_n)$ such that $\xx_0$ cannot be written in the form \eqref{outcomeform}, or equivalently that
\[
\xx_n \notin S_{\rho_n} :=  \left\{\xx_0 + \sum_{m = 0}^\infty \epsilon_m (1 - \beta)\beta^m \rho_n\vv : \epsilon_m \in \{-1,+1\}\right\}.
\]
We remark that if $\dim(X) \geq 2$, this is easy; Bob may simply choose the ball $B(\xx_n,\rho_n)$ so that $\xx_n$ does not lie on the line $\xx_0 + \R\vv$. This is possible since the set of legal centers for Bob's balls always contains a nonempty open set; see below for details.

On the other hand, suppose that $\dim(X) = 1$, i.e. $X = \R$. Then for each $\rho > 0$, the set $S_\rho$ has Hausdorff dimension $\log_\beta(1/2) < 1$ (cf. \cite{Hutchinson}) and thus Lebesgue measure zero.\footnote{Alternatively, we may compute directly that $\lambda(S_\rho) = 0$:
\begin{align*}
\lambda(S_\rho) &\leq \sum_{\epsilon_0 = \pm 1} \cdots \sum_{\epsilon_{M - 1} = \pm 1} \lambda\left\{\xx_0 + \sum_{m = 0}^\infty \epsilon_m (1 - \beta)\beta^m \rho_n\vv : \epsilon_m \in \{-1,+1\} \all m\geq M\right\}\\
&= 2^M \lambda\left\{\sum_{m = M}^\infty \epsilon_m (1 - \beta)\beta^m \rho_n\vv : \epsilon_m \in \{-1,+1\} \all m\geq M\right\}\\
&\leq 2^M \diam\left\{\sum_{m = M}^\infty \epsilon_m (1 - \beta)\beta^m \rho_n\vv : \epsilon_m \in \{-1,+1\} \all m\geq M\right\} \\
&= 2^{M + 1} \sum_{m = M}^\infty (1 - \beta)\beta^m \rho_n
= (2\beta)^M 2\rho_n \tendsto M 0.
\end{align*}} In particular, $S_\rho$ has empty interior. Now suppose that Bob has just made the move $B(\xx_{n - 1},\rho_{n - 1})$ (the last move where he ``does not have free will''), and that Alice has just deleted the set $B(\xx_{n - 1}',\rho_{n - 1}')$. Then the set of $\xx$ for which Bob can legally play the ball $B(\xx,\beta\rho_{n - 1})$ is the set
\begin{equation}
\label{Bobscenters}
B(\xx_{n - 1},(1 - \beta)\rho_{n - 1}) \butnot B(\xx_{n - 1}',\beta\rho_{n - 1} + \rho_{n - 1}').
\end{equation}
Since $\beta < 1/3$ and $\rho_{n - 1}' \leq \beta \rho_{n - 1}$, we have $(1 - \beta)\rho_{n - 1} > \beta\rho_{n - 1} + \rho_{n - 1}'$. It follows that the set \eqref{Bobscenters} contains a nonempty open set, so Bob can choose his center not to lie in the set $S_{\beta\rho_{n - 1}}$. In this way he avoids the point $\xx_0$.
\end{itemize}
\end{proof}

\begin{remark}
\label{remarkshrinking}
It is \emph{not} true that for a shrinking game, if a player has a strategy ensuring $\phi\in \SS$ then he or she also has a strategy ensuring $\phi\in \SS\cap Z$. For example, let $X = B^\N$ with the metric
\[
\dist(\theta,\psi) = 4^{-\min\{k : \theta_k \neq \psi_k\}}
\]
and consider the set
\[
S = \{\theta\in X: \theta_k = 1 \text{ for infinitely many $n\in\N$}\}.
\]
As we have seen above, the $(\frac 12,\frac 12,S)$-strong winning game is shrinking. However, Alice has a strategy ensuring $\phi\in \iota^{-1}(S)\cup (E_\Gamma^\N\butnot Z)$, but she does not have a strategy ensuring $\phi\in \iota^{-1}(S)$. Indeed, the only turns on which the choices of players affect the outcome of the game are those turns $n$ for which
\begin{equation}
\label{goodturns}
\rho_n < 4^{-k} \leq \rho_{n - 1} \text{ for some } k.
\end{equation}
Either player can ensure that he or she controls all such turns by simply choosing $\rho_n = \rho_{n - 1}$ unless it is possible to choose $\rho_n$ satisfying \eqref{goodturns}, in which case he or she does that instead. This ensures that either the outcome is not in $Z$, or it is precisely what the player wanted, except for the finitely many values which were determined by Bob's initial ball. In particular, Alice has a strategy ensuring that $\phi\in \iota^{-1}(S)\cup (E_\Gamma^\N\butnot Z)$, and Bob has a strategy ensuring that $\phi\in \iota^{-1}(X\butnot S)\cup (E_\Gamma^\N\butnot Z)$. The existence of the latter strategy guarantees that Alice cannot have a strategy ensuring $\phi\in \iota^{-1}(S)$.
\end{remark}

\bibliographystyle{amsplain}

\bibliography{bibliography}

\end{document}